\documentclass[12pt,reqno]{article}
\usepackage{latexsym, amssymb, amscd, mathrsfs,geometry, amsmath, rotating,amsthm,color}

\newtheorem{thm}{Theorem}[section]
\newtheorem{lem}[thm]{Lemma}
\newtheorem{prop}[thm]{Proposition}

\newtheorem{Def}{Definition}[section]
\newtheorem{ex}{Example}[section]

\newtheorem{rem}{Remark}
\newcounter{Par}[subsection]

\DeclareMathOperator{\Supp}{Supp}
\def\o{\overline}
\def\a{\alpha}
\def\b{\beta}
\def\g{\gamma}

\def\la{\lambda}
\def\ve{\varepsilon}
\def\F{\mathfrak}
\def\r{\mathrm}

\def\C{\mathcal}
\def\CC{\mathbb{C}}
\def\ZZ{\mathbb{Z}}
\def\beqn{\begin{equation*}}
	\def\eeqn{\end{equation*}}
\def\beq{\begin{equation}}
	\def\eeq{\end{equation}}
\def\({\left(}
\def\){\right)}

\counterwithin{equation}{section}

\title{Quasi-Whittaker supermodules over Lie superalgebras}
\author{Wenting Gao, Baiying He, Shiyuan Liu, Jialin Xu}

\date{\today}

\begin{document}
	\maketitle
	
\begin{abstract}
	In this paper, we develop a general theory of quasi-Whittaker supermodules over Lie superalgebras induced from an arbitrary ideal. We determine the quasi-Whittaker vectors in universal supermodules, establish an irreducibility criterion, and classify several families of irreducible supermodules. The odd part produces a new irreducibility phenomenon absent from the Lie algebra setting. As applications, we determine all  irreducible quasi-Whittaker supermodules over   the $N=1$ super Schrödinger algebra and the $N=1$ $\frac{3}{2}$-conformal Galilei superalgebra, and over the complete spectrum-generating superalgebra in a special case.	
	\end{abstract}
	
	Keyword: quasi-Whittaker supermodule, Lie superalgebra, quasi-Whittaker vector

2020 MCS: 17B10, 17B20, 17B65

\section{Introduction}

Whittaker modules form an important class of non-weight modules in
representation theory. They first appeared for $\F{sl}_2$ in the 
work of Arnal and Pinczon \cite{AP}, and were subsequently
introduced systematically by Kostant for finite-dimensional complex
 semisimple Lie algebras \cite{K}. More precisely, let $\F{g}=\F{n}
 _{-}\oplus\F{h}\oplus\F{n}_{+}$ be a complex semisimple Lie algebra
 and let $\eta\colon\F{n}_{+}\rightarrow\CC$ be a Lie algebra
  homomorphism. A $\F{g}$-module is called a Whittaker module if 
 $x-\eta(x)$ acts locally nilpotently for every $x\in\F{n}_{+}$.

 Various classes of irreducible Whittaker modules in this setting were 
 investigated in \cite{B,K,M1,M2,MS}. Analogues and generalizations
 have since been studied for the Virasoro algebra and related
 algebras \cite{LPX1,LZ,LGZ,OW}, affine Kac--Moody algebras 
 \cite{ALZ,CGLW}, Lie algebras of polynomial vector fields \cite{ZL},
  classical Lie superalgebras \cite{C}, and quantum groups
  \cite{O,XZ,XGZ}.
A general framework based on Whittaker pairs was developed in \cite{BM}; see also \cite{MZ}. 

For the Schr\"odinger algebra, Cai, Cheng and Shen \cite{CCS} studied modules induced from the Heisenberg subalgebra and called them quasi-Whittaker modules in order to distinguish them from modules induced from Borel or parabolic subalgebras. Quasi-Whittaker modules were subsequently investigated for the Euclidean algebra $\F{e}(3)$, conformal Galilei algebras, the $(2+1)$-dimensional spacetime Schr\"odinger algebra, and the $n$-th Schr\"odinger algebra \cite{CC,CL,CSZ,CW1}; their categorical structure was further studied in \cite{JLZ}. 

The common features of these examples led Cheng et al. \cite{CGLZZ} to formulate a uniform theory for quasi-Whittaker modules over nonsemisimple Lie algebras induced from nonperfect ideals. In particular, they introduced the Whittaker annihilator, determined quasi-Whittaker vectors and the irreducibility of universal quasi-Whittaker modules, and classified several families of irreducible modules.

    Whittaker (super)modules for Lie superalgebras have also been extensively studied, including those for simple, classical, and quasi-reductive Lie superalgebras \cite{BCW,C,CC2}, the Neveu--Schwarz and Ramond algebras \cite{LPX1,LPX2,CYZ}, the $N=1$ and $N=2$ super-BMS$_3$ algebras \cite{LPXZ1,DGL,WGL}, the superconformal current algebra \cite{LPXZ2}, a subalgebra of the $N=2$ superconformal algebra \cite{JXZ}, and the $N=1$ super Schr\"odinger algebra \cite{WCM2}
    
Recently, Wang, Chen and Ma \cite{WCM} introduced and studied quasi-Whittaker supermodules for the $N=1$ super Schrödinger algebra. Their work inspired us to extend the general theory of quasi-Whittaker modules for Lie algebras developed in \cite{CGLZZ} to Lie superalgebras. This extension reveals a genuine structural difference: the odd part changes the irreducibility criterion for universal quasi-Whittaker supermodules $W(\phi)$, producing an additional irreducible case with no analogue in the Lie algebra setting.

More precisely, we determine all quasi-Whittaker vectors in $W(\phi)$ and establish a sharp irreducibility criterion. In particular, $W(\phi)$ may be irreducible even when $\F{g}^{\phi}\neq\F{p}$, a phenomenon with no Lie algebra analogue. We also characterize bland quasi-Whittaker supermodules and classify all irreducible quasi-Whittaker supermodules when both the even and odd components of $\F{g}^{\phi}/\F{p}$ are one-dimensional. Our method yields classifications for the $N=1$ super Schrödinger algebra, the $N=1$ $\frac{3}{2}$-conformal Galilei superalgebra, and, in a special case, the complete spectrum-generating superalgebra. The classification obtained for the $N=1$ super Schrödinger algebra differs from that stated in \cite{WCM}.

The paper is organized as follows. In Section~2, we introduce quasi-Whittaker supermodules over Lie superalgebras, define the Whittaker annihilator, and establish the preliminary results needed later. Section~3 contains the main results: we determine the quasi-Whittaker vectors in $W(\phi)$, give the irreducibility criterion for $W(\phi)$, classify bland supermodules, and treat the special case in which both homogeneous components of $\F{g}^{\phi}/\F{p}$ are one-dimensional. In Section~4, we apply the general theory to the three Lie superalgebras described above.

\section{Quasi-Whittaker supermodules}

We first introduce the notation used throughout the paper. For a well-ordered set $I$ (finite, countable, or uncountable), let
\beqn
\ZZ_+^I=\{\a\colon I\rightarrow \ZZ_+\mid \a(i)\neq 0~\text{only for finitely many $i\in I$}\}.
\eeqn 
For $\a\in\ZZ_+^I$,  write $\a_i=\a(i)$ for $i\in I$, and define 
\beqn
|\a|=\sum_{i\in I}\a_i.
\eeqn
For $i\in I$, let $\varepsilon_i\in\ZZ_+^I$ be defined by
\beqn
\varepsilon_{ij}=\begin{cases}
	1, \quad &\text{if $j= i$};\\
	0, \quad &\text{if $j\neq i$},
\end{cases}\quad \forall j\in I.
\eeqn

For $\a\in\ZZ_+^I$ and a symbol $x$, we use the notation
\beqn
x^\a=x_{i_1}^{\a_{i_1}}x_{i_2}^{\a_{i_2}}\cdots x_{i_n}^{\a_{i_n}}, 
\eeqn
where $i_1>i_2>\cdots>i_n$ are chosen such that 
\beqn
\a_k=0, \quad\forall k\in I\backslash \{i_1, i_2, \cdots, i_n\}.
\eeqn
Here, $x^{0}$ is understood to be $1$.

For $\a, ~\b\in\ZZ_+^I$, we write $\a>\b$ if either $|\a|>|\b|$, or $|\a|=|\b|$ and there exists $i\in I$ such that $\a_i>\b_i$ and  $\a_j=\b_j$ for each  $j>i$. This defines a total order on $\ZZ_+^I$.

Let $\a\in\ZZ_+^I$ with $|\a|>0$. The {\bf height} of $\a$, denoted by $\r{ht}(\a)$, is the smallest index $k\in I$ such that $\a_k\neq0$. Set
\beqn
\hat{\a}=\a-\varepsilon_{\r{ht}(\a)}.
\eeqn

\begin{lem}[\cite{CGLZZ}]\label{L2.1}
	Let $\a,\b\in\ZZ^I_+$ satisfy $|\a|,|\b|>0$ and $\a>\b$. Then $\hat{\a}\geq\hat{\b}$. Moreover, equality holds if and only if there exists $j<\r{ht}(\a)$ such that $\b=\hat{\a}+\varepsilon_j$.
\end{lem}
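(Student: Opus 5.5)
We argue directly from the definition of the order on $\ZZ_+^I$, splitting according to whether $|\a|>|\b|$ or $|\a|=|\b|$. Throughout, note that $\hat{\a}$ differs from $\a$ only at the index $\r{ht}(\a)$, where it is smaller by one, and $|\hat{\a}|=|\a|-1$. The converse direction of the equality statement is immediate. If $\b=\hat{\a}+\varepsilon_j$ with $j<\r{ht}(\a)$, then $\hat{\a}$ vanishes at all indices $\le j$. Hence $\r{ht}(\b)=j$ and $\hat{\b}=\hat{\a}$. (Such a $\b$ indeed satisfies $\b<\a$: it has the same size as $\a$ and agrees with $\a$ above $\r{ht}(\a)$, where it is smaller by one.)

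\emph{Case $|\a|>|\b|$.} Then $|\hat{\a}|=|\a|-1>|\b|-1=|\hat{\b}|$, so $\hat{\a}>\hat{\b}$ strictly. No $\b$ of the form $\hat{\a}+\varepsilon_j$ occurs here, since such a $\b$ would have $|\b|=|\a|$. The equivalence therefore holds vacuously in this case.

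\emph{Case $|\a|=|\b|$.} Let $i$ be the largest index with $\a_i\neq\b_i$, so that $\a_i>\b_i$ and $\a_k=\b_k$ for all $k>i$. Set $a=\r{ht}(\a)$; since $\a_i>0$ we have $a\le i$. The key observation is that $\r{ht}(\b)<i$. Indeed, $\sum_{k\le i}\b_k=\sum_{k\le i}\a_k\ge\a_i>\b_i$ because the total sizes agree, so $\b$ has positive mass strictly below $i$. Consequently $\hat{\b}_k=\b_k$ for all $k\ge i$.

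\emph{Subcase $a<i$.} Then also $\hat{\a}_k=\a_k$ for $k\ge i$. The vectors $\hat{\a},\hat{\b}$ agree above $i$, have the same size, and satisfy $\hat{\a}_i=\a_i>\b_i=\hat{\b}_i$. Hence $\hat{\a}>\hat{\b}$ strictly. In this subcase $\b$ cannot be of the form $\hat{\a}+\varepsilon_j$ with $j<a$, since that would force $\b_i=\a_i$.

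\emph{Subcase $a=i$.} Then $\a$ vanishes below $i$, and $\hat{\a}_i=\a_i-1\ge\b_i=\hat{\b}_i$.
\begin{itemize}
\item If $\a_i-1>\b_i$, we again get strict inequality $\hat{\a}>\hat{\b}$.
\item If $\a_i-1=\b_i$, then $\hat{\a}$ and $\hat{\b}$ agree at all indices $\ge i$. Moreover $\hat{\a}$ is zero below $i$, while the mass of $\hat{\b}$ below $i$ equals $(\a_i-\b_i)-1=0$. Thus $\hat{\b}=\hat{\a}$. Also $\b$ has exactly one unit of mass below $i$, located at $j=\r{ht}(\b)<i=\r{ht}(\a)$, so $\b=\hat{\a}+\varepsilon_j$.
\end{itemize}
This proves both the inequality and the characterization of equality.

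The only delicate point is the bookkeeping showing $\r{ht}(\b)<i$, which uses $|\a|=|\b|$. Once that is in place, each subcase reduces to comparing the entries at the single index $i$.
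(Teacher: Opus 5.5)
Your proof is correct and complete. The paper does not prove this lemma; it quotes it from \cite{CGLZZ}, so there is no in-paper argument to compare with, and your direct case analysis is a self-contained verification of the cited statement. The key steps all check out: splitting on $|\a|>|\b|$ versus $|\a|=|\b|$, taking $i$ to be the top index where $\a$ and $\b$ differ, showing $\r{ht}(\b)<i$ from $|\a|=|\b|$, and then separating $\r{ht}(\a)<i$ from $\r{ht}(\a)=i$.

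The side checks that $\b$ is not of the form $\hat{\a}+\varepsilon_j$ in the strict-inequality cases are unnecessary. (You also omit this check in the case $\a_i-1>\b_i$.) The implication you proved at the outset already gives it by contraposition. The only direction that needs the case analysis is that $\hat{\a}=\hat{\b}$ forces $\b=\hat{\a}+\varepsilon_j$ with $j<\r{ht}(\a)$, and your final bullet handles that.
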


Throughout the paper, all subalgebras  a Lie superalgebra are understood to be $\ZZ_2$-graded. For a homogeneous element $x$ of a $\ZZ_2$-graded linear space, we write $|x|$ for its parity. Let $\F{g}$ be a Lie superalgebra and let $\F{p}$ be an ideal of $\F{g}$.

\begin{Def}
Let $\phi\colon\F{p}\rightarrow\CC$ be a Lie superalgebra homomorphism, where $\CC$ is regarded as a purely even abelian Lie superalgebra, and let $V$ be a $\F{g}$-supermodule.
\begin{itemize}
	\item[(1)] A vector $v\in V$ is called a {\bf quasi-Whittaker vector of type $\phi$} if $pv=\phi(p)v$ for each $p\in\F{p}$.
	\item[(2)] $V$ is said to be a {\bf quasi-Whittaker supermodule of type $\phi$} if $V$ is generated by a homogeneous quasi-Whittaker vector of type $\phi$.
\end{itemize}	
\end{Def}

\begin{rem}
If $\phi=0$, then the quasi-Whittaker condition reduces to the 
requirement that the cyclic vector be annihilated by $\F{p}$; thus
the notion includes every supermodule generated by such a homogeneous
 vector. To exclude this degenerate case, one usually assumes that 
 $\phi\neq0$. Since $\phi([\F{p},\F{p}])=0$, the existence of such a
 nonzero homomorphism forces $\F{p}$ to be nonperfect. Nevertheless,
 all the results below remain valid in the extreme case $\phi=0$.
\end{rem}

Motivated by \cite{CGLZZ}, we introduce the following definition.

\begin{Def}
The {\bf Whittaker annihilator} of $\phi$ is defined by
\beqn
\F{g}^{\phi}=\{y\in\F{g}\mid \phi([y, p])=0, ~\forall p\in\F{p}\}.
\eeqn	
\end{Def}

\begin{lem}
The Whittaker annihilator $\F{g}^{\phi}$ is a subalgebra of $\F{g}$ containing $\F{p}$.	
\end{lem}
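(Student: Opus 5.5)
The plan is to verify the three required properties directly from the definitions: $\F{g}^{\phi}$ is a $\ZZ_2$-graded subspace, it contains $\F{p}$, and it is closed under the bracket. The only inputs needed are two facts about $\phi$ and the super Jacobi identity. First, $\phi$ is even, because $\CC$ is purely even; hence $\phi(\F{p}_{\bar 1})=0$. Second, $\phi([\F{p},\F{p}])=0$, because $\phi$ is a homomorphism into an abelian Lie superalgebra.

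Linearity of the defining condition shows that $\F{g}^{\phi}$ is a subspace. For the grading, I take $y=y_{\bar0}+y_{\bar1}\in\F{g}^{\phi}$ and a homogeneous $p\in\F{p}$.
\begin{itemize}
\item If $p$ is even, then $[y_{\bar1},p]$ is odd, so $\phi([y_{\bar1},p])=0$. It follows that $\phi([y_{\bar0},p])=\phi([y,p])=0$.
\item If $p$ is odd, then $[y_{\bar0},p]$ is odd, so $\phi([y_{\bar0},p])=0$. It follows that $\phi([y_{\bar1},p])=\phi([y,p])=0$.
\end{itemize}
In both cases the component whose bracket with $p$ has even parity inherits the vanishing from $y$, and the other component vanishes by parity. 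Since every $p$ is a sum of homogeneous elements, both $y_{\bar0}$ and $y_{\bar1}$ lie in $\F{g}^{\phi}$.

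The inclusion $\F{p}\subseteq\F{g}^{\phi}$ is immediate. For $y\in\F{p}$ we have $[y,p]\in[\F{p},\F{p}]$, on which $\phi$ vanishes.

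For closure under the bracket, the grading just established lets me take homogeneous $x,y\in\F{g}^{\phi}$ and homogeneous $p\in\F{p}$. The super Jacobi identity gives
\beqn
[[x,y],p]=[x,[y,p]]-(-1)^{|x||y|}[y,[x,p]].
\eeqn
Because $\F{p}$ is an ideal, $[y,p]$ and $[x,p]$ lie in $\F{p}$. Applying the defining property of $x$ to $[y,p]$, and of $y$ to $[x,p]$, shows that $\phi$ kills both terms on the right. Hence $\phi([[x,y],p])=0$, and bilinearity extends this to all $x,y\in\F{g}^{\phi}$ and all $p\in\F{p}$.

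The only step needing care is the gradedness of $\F{g}^{\phi}$, which relies on $\phi$ being even. Once that is established, the rest reduces to homogeneous elements, where the super Jacobi identity applies with its signs and the ideal property of $\F{p}$ finishes the argument.
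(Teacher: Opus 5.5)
Your proposal is correct and follows essentially the same route as the paper. Gradedness comes from $\phi(\F{p}_{\bar 1})=0$; the paper shows $y_{\bar 0}\in\F{g}^{\phi}$ and obtains $y_{\bar 1}$ by subtraction, whereas you treat both components symmetrically. Closure under the bracket follows from the super Jacobi identity together with the ideal property of $\F{p}$, and $\F{p}\subseteq\F{g}^{\phi}$ follows from $\phi([\F{p},\F{p}])=0$.
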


\begin{proof}
It is plain that $\F{g}^\phi$ is a linear subspace of $\F{g}$. Let
$y=y_{\o{0}}+y_{\o{1}}\in\F{g}^\phi$. Since $\F{p}$ is an ideal,
$[y_{\o{0}},\F{p}_{\o{1}}]\subseteq\F{p}_{\o{1}}$, and hence
$\phi([y_{\o{0}},\F{p}_{\o{1}}])=0$. Moreover, for every
$p\in\F{p}_{\o{0}}$, we have $[y_{\o{1}},p]\in\F{p}_{\o{1}}$ and therefore
\beqn
\phi([y_{\o{0}},p])=\phi([y,p])-\phi([y_{\o{1}},p])=0.
\eeqn
Thus $y_{\o{0}}\in\F{g}^\phi$, and consequently
$y_{\o{1}}=y-y_{\o{0}}\in\F{g}^\phi$. Hence, $\F{g}^\phi$ is
$\ZZ_2$-graded.

Now let $y_1,y_2\in\F{g}^\phi$ and $p\in\F{p}$ be homogeneous. By the super Jacobi identity,
\beqn
\phi([p,[y_1,y_2]])=\phi([[p,y_1],y_2])+(-1)^{|p||y_1|}\phi([y_1,[p,y_2]])=0.
\eeqn
Therefore, $\F{g}^\phi$ is a subalgebra. Finally, since $\phi$ is a Lie superalgebra homomorphism into the abelian Lie superalgebra $\CC$, we have $\phi([\F{p},\F{p}])=0$. Thus $\F{p}\subseteq\F{g}^\phi$.
\end{proof}

For a $\F{g}$-supermodule $V$, set
\beq
V_{\phi}=\{v\in V\mid pv=\phi(p)v, ~\forall p\in \F{p}\}.
\eeq

\begin{lem}\label{L2.3}
Let $V$ be a $\F{g}$-supermodule. Then $V_{\phi}$ is a $\F{g}^{\phi}$-subsupermodule of $V$.
\end{lem}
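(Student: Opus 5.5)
We need to show three things: that $V_\phi$ is a linear subspace, that it is $\ZZ_2$-graded, and that it is stable under the action of $\F{g}^\phi$.

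\emph{Subspace.} This is immediate, since the defining conditions $pv=\phi(p)v$ are linear in $v$.

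\emph{Grading.} Write $v=v_{\o{0}}+v_{\o{1}}\in V_\phi$ and take a homogeneous $p\in\F{p}$. If $p$ is odd, then $\phi(p)=0$ because $\phi$ is even (its target $\CC$ is purely even). Hence $pv=0$, and splitting $pv$ into its homogeneous components gives $pv_{\o{0}}=0$ and $pv_{\o{1}}=0$. If $p$ is even, then $pv_{\o{i}}$ has parity $\o{i}$. Comparing components of $pv=\phi(p)v_{\o{0}}+\phi(p)v_{\o{1}}$ gives $pv_{\o{i}}=\phi(p)v_{\o{i}}$ for $i=0,1$. Since $\F{p}$ is graded, every element of $\F{p}$ is a sum of homogeneous ones. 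By linearity in $p$, both $v_{\o{0}}$ and $v_{\o{1}}$ lie in $V_\phi$, so $V_\phi$ is graded.

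\emph{Stability.} Take homogeneous $y\in\F{g}^\phi$ (this suffices because $\F{g}^\phi$ is graded by the preceding lemma), $v\in V_\phi$ homogeneous (this suffices by the grading step), and homogeneous $p\in\F{p}$. Then
\beqn
p(yv)=(-1)^{|p||y|}y(pv)+[p,y]v=(-1)^{|p||y|}\phi(p)\,yv+\phi([p,y])v,
\eeqn
where the second equality uses $[p,y]\in\F{p}$, which holds because $\F{p}$ is an ideal.

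The term $\phi([p,y])$ vanishes because $y\in\F{g}^\phi$; here $\phi([p,y])=\pm\phi([y,p])$. For the sign factor:
\begin{itemize}
\item if $p$ is odd, then $\phi(p)=0$, so the first term vanishes regardless of the sign;
\item if $p$ is even, then $(-1)^{|p||y|}=1$.
\end{itemize}
In either case $p(yv)=\phi(p)\,yv$. Thus $yv\in V_\phi$, and $V_\phi$ is a $\F{g}^\phi$-subsupermodule.

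The only delicate point is the sign bookkeeping, and it is resolved by the observation that $\phi$ vanishes on $\F{p}_{\o{1}}$.
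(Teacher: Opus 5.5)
Your proof is correct and takes essentially the same approach as the paper. You show that $V_\phi$ is graded by comparing homogeneous components, using $\phi(\F{p}_{\o{1}})=0$. You then get stability from $p(yv)=[p,y]v+(-1)^{|p||y|}y(pv)$, with $\phi([p,y])=0$ and the sign absorbed because $\phi$ vanishes on odd elements; the only difference is that you state explicitly the reduction to homogeneous $y$ via the gradedness of $\F{g}^\phi$, which the paper uses implicitly.
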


\begin{proof}
It is clear that $V_\phi$ is a vector subspace of $V$.
Since $\phi$ is a Lie superalgebra homomorphism, $\phi(\F{p}_{\o{1}})=0$. We see that 
\beqn
\phi(p)=\phi(p_{\o{0}}), ~\forall p=p_{\o{0}}+p_{\o{1}}\in\F{p}.
\eeqn
 Assume that $v=v_{\o{0}}+v_{\o{1}}\in V_{\phi}$. Then 
\begin{align*}
&	p_{\o{0}}v_{\o{0}}+p_{\o{0}}v_{\o{1}}=p_{\o{0}}v=\phi(p_{\o{0}})v=\phi(p_{\o{0}})v_{\o{0}}+\phi(p_{\o{0}})v_{\o{1}}, ~\forall p_{\o{0}}\in\F{p}_{\o{0}};\\
&p_{\o{1}}v_{\o{0}}+p_{\o{1}}v_{\o{1}}=p_{\o{1}}v=0, ~\forall p_{\o{1}}\in \F{p}_{\o{1}}.
\end{align*}
It follows that 
\beqn
p_{\o{0}}v_{\o{0}}=\phi(p_{\o{0}})v_{\o{0}}, ~p_{\o{0}}v_{\o{1}}=\phi(p_{\o{0}})v_{\o{1}}, ~p_{\o{1}}v_{\o{0}}=p_{\o{1}}v_{\o{1}}=0, ~\forall p=p_{\o{0}}+p_{\o{1}}\in \F{p}.
\eeqn	
We obtain that
\beqn
pv_{\o{0}}=\phi(p)v_{\o{0}}, ~pv_{\o{1}}=\phi(p)v_{\o{1}}, ~\forall p\in\F{p}, 
\eeqn
Thus $v_{\o{0}},v_{\o{1}}\in V_{\phi}$, so $V_\phi$ is $\ZZ_2$-graded.

Let $v\in V_{\phi}$ and let $y\in\F{g}^{\phi}$ be homogeneous. Then
\begin{align*}
	&pyv=[p, y]v+(-1)^{|p||y|}ypv=\phi([p, y])v+(-1)^{|p||y|}y(\phi(p)v)\\
	=&0+(-1)^{|p||y|}\phi(p)yv=\phi(p)yv
\end{align*}
for each homogeneous $p\in\F{p}$.  Here, the last equality holds because $\phi(p)=0$ when $p$ is odd.
Therefore, $V_{\phi}$ is a $\F{g}^{\phi}$-subsupermodule of $V$.
\end{proof}

In what follows, we choose homogeneous bases compatible with the inclusions $\F{p}\subseteq\F{g}^\phi\subseteq\F{g}$, as follows:
\beq\label{F2.3}
\underbrace{x_i, ~i\in I;\quad \underbrace{y_j, ~j\in J;\quad\underbrace{p_k, ~k\in K, }_{\text{a homogeneous basis of }\F{p}}}_{\text{a homogeneous basis of }\F{g}^{\phi}}}_{\text{a homogeneous basis of} ~\F{g}}
\eeq
where $I$, $J$, and $K$ are well-ordered sets (or empty).
Set
\begin{align*}
	&\widetilde{\ZZ}_{+}^{I}=\{\a\in\ZZ_+^I\mid\a_i\in\{0,1\}\text{ if }|x_i|=\o{1}\},\\
	&\widetilde{\ZZ}_{+}^{J}=\{\b\in\ZZ_+^J\mid\b_j\in\{0,1\}\text{ if }|y_j|=\o{1}\}.
\end{align*}

Let $\C{U}(\F{g})$, $\C{U}(\F{g}^{\phi})$, and $\C{U}(\F{p})$ denote the universal enveloping algebras of $\F{g}$, $\F{g}^{\phi}$, and $\F{p}$, respectively. We have natural inclusions
\beq\label{F2.4}
\C{U}(\F{p})\subseteq \C{U}(\F{g}^{\phi})\subseteq \C{U}(\F{g}).
\eeq
By the PBW theorem, $\C{U}(\F{g})$ is a free right $\C{U}(\F{g}^{\phi})$-module with basis
\beqn
x^{\a}, \quad \a\in\widetilde{\ZZ}_+^I, 
\eeqn
and is a free right $\C{U}(\F{p})$-module with basis
\beqn
x^\a y^{\b}, \quad \a\in\widetilde{\ZZ}_+^I, ~\b\in\widetilde{\ZZ}_+^J.
\eeqn

For $v\in V_{\phi}$ and homogeneous elements $p\in\F{p}$ and $u\in\C{U}(\F{g})$, the equality $\phi(p)=0$ for odd $p$ gives
\beq\label{F2.5}
\(p-\phi(p)\)uv=[p,u]v.
\eeq
Here, for homogeneous $x,y\in\C{U}(\F{g})$,  
\beqn
[x, y]=xy-(-1)^{|x||y|}yx, \quad\forall x, y\in \C{U}(\F{g}),
\eeqn
and the bracket is extended bilinearly to $\C{U}(\F{g})$.

The following lemma is the Lie-superalgebra analogue of \cite[Lemma 2.4]{CGLZZ}. Its proof follows by tracking the Koszul signs in the argument given there and using \eqref{F2.5}; we omit the details.

\begin{lem}\label{L2.4}
Assume that $|I|>0$, and let $0\neq\a\in\widetilde{\ZZ}_+^I$ have height $k$. Let $V$ be a $\F{g}$-supermodule, let $p\in\F{p}$ be homogeneous, and let $v\in V_{\phi}$. Then
\beqn
\begin{aligned}
(p-\phi(p))x^\a v
&\equiv(-1)^{|p|\left|x^{\a-\a_k\varepsilon_k}\right|}
\a_k\phi([p,x_k])x^{\hat\a}v\\
&\quad \r{mod}~\r{Span}_{\CC}
\{x^\b v\mid \b<\hat\a,~\b_i\leq\a_i\text{ for all }i\in I\}.
\end{aligned}
\eeqn	
\end{lem}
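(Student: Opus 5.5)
The plan is to expand the commutator using \eqref{F2.5} and then control each term by induction on the degree $|\a|$, adapting the argument of \cite[Lemma 2.4]{CGLZZ}. Write $x^\a=x_{i_1}^{\a_{i_1}}\cdots x_{i_n}^{\a_{i_n}}$ with $i_1>\cdots>i_n=k$, so that $x^\a=x^{\a-\a_k\varepsilon_k}x_k^{\a_k}$. By \eqref{F2.5}, $(p-\phi(p))x^\a v=[p,x^\a]v$. Expanding the super bracket by the graded Leibniz rule gives
\beqn
[p,x^\a]=\sum (\pm)\, x_{i_1}\cdots x_{i_{s-1}}[p,x_{i_s}]x_{i_{s+1}}\cdots x_{i_m},
\eeqn
a sum over the individual factors of $x^\a$ written as a word $x_{i_1}\cdots x_{i_m}$ with repetitions. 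The sign attached to each term is $(-1)^{|p|(|x_{i_1}|+\cdots+|x_{i_{s-1}}|)}$. Since $\F{p}$ is an ideal, each $q=[p,x_{i_s}]$ lies in $\F{p}$ and is homogeneous.

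Each term $x_{i_1}\cdots x_{i_{s-1}}\,q\,x_{i_{s+1}}\cdots x_{i_m}v$ is then handled by moving $q$ to the right. Write $q=\phi(q)+(q-\phi(q))$. The scalar part contributes $\phi([p,x_{i_s}])$ times the word with the factor $x_{i_s}$ deleted. The remaining part is $x_{i_1}\cdots x_{i_{s-1}}(q-\phi(q))x^{\g}v$ for a tail monomial $x^{\g}$ with $|\g|<|\a|$. For this remaining part I apply induction on $|\a|$, or equivalently a recursive reordering. It yields vectors of the form $x^{\b'}v$ with total degree at most $|\a|-2$ and $\b'\le\a$ componentwise. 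Every such $\b$ satisfies $\b<\hat\a$, because $|\b|<|\hat\a|=|\a|-1$. Here one must check that the PBW straightening of words in the $x_i$ back into ordered monomials keeps componentwise bounds. This fails in general, since brackets $[x_i,x_j]$ may produce $y$'s or new $x$'s. So, as in \cite{CGLZZ}, I would use the structure of the terms: only the deletion terms matter at degree $|\a|-1$. The lower-degree terms I would absorb after straightening. At that stage $y$'s and $p$'s act on $v$ and stay inside the span (using Lemma \ref{L2.3}), and the componentwise condition is tracked via the ordering only in the top-degree terms.

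The main step is identifying the top-degree terms. Deleting one factor $x_{i_s}$ from the ordered word $x^\a$ leaves an already-ordered monomial $x^{\a-\varepsilon_{i_s}}$ of degree $|\a|-1$. This monomial has coefficient $\phi([p,x_{i_s}])$. Since $\phi$ vanishes on odd elements, only factors with $|p|=|x_{i_s}|$ contribute. When $i_s>k$, the exponent $\a-\varepsilon_{i_s}$ is smaller than $\hat\a=\a-\varepsilon_k$ in the order, because it drops a higher index. It is also componentwise $\le\a$, so these terms go into the error span. The $\a_k$ deletions of $x_k$ all give $x^{\hat\a}$. The sign for each is $(-1)^{|p|\,|x^{\a-\a_k\varepsilon_k}|}$ times $(-1)^{|p|\,t|x_k|}$ for $t$ earlier copies of $x_k$.

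Here lies the delicate point, which I expect to be the main obstacle: the Koszul sign bookkeeping. If $x_k$ is odd, then $\a_k\le 1$, so $t=0$ and the sign is as claimed. If $x_k$ is even, the extra sign is trivial. In both cases the total is $(-1)^{|p||x^{\a-\a_k\varepsilon_k}|}\a_k\phi([p,x_k])x^{\hat\a}v$. Finally I would verify that the terms of degree $|\a|-1$ are exactly these deletion terms, with no contributions from straightening. This holds since any extra bracket lowers the degree in the $x$'s by at least one more.
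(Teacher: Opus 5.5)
Your skeleton matches the intended argument (the paper simply defers to the proof in \cite{CGLZZ} with Koszul signs added). Your identification of the leading term, the comparison $\a-\varepsilon_{i_s}<\hat\a$ for $i_s>k$, and your sign bookkeeping are all correct. The gap is in the lower-order terms.

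You claim that reassembling the head $x_{i_1}\cdots x_{i_{s-1}}$ with the output of the induction needs PBW straightening, which ``fails in general''. You then propose to absorb the straightening terms because ``$y$'s and $p$'s act on $v$ and stay inside the span (using Lemma~\ref{L2.3})''. That step is false. Lemma~\ref{L2.3} gives only $y_jv\in V_\phi$, not $y_jv\in\CC v$, so a term $x^{\b}y_jv$ is not in $\r{Span}_\CC\{x^\b v\}$; in $W(\phi)$ with $v=w_\phi$ it never is. Likewise, a bracket $[x_i,x_j]$ producing a new $x_l$ would break $\b_l\le\a_l$. Your remark that the componentwise condition is tracked ``only in the top-degree terms'' concedes exactly the part of the statement left unproved. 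The paper needs that bound on \emph{all} error terms, for example for the $\F{p}$-stability of $\r{Span}_\CC\{x^{\a'}v\mid \a'_i\le\a_i\}$ in the local-finiteness theorem of Section~2.

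The missing observation is that no straightening ever occurs. By your inductive hypothesis, $(q-\phi(q))x^{\g}v$ is a combination of vectors $x^{\b'}v$ with $\b'\le\g$ componentwise. The tail $\g$ involves only indices $\le i_s$, while every letter of the head has index $\ge i_s$. So the word $x_{i_1}\cdots x_{i_{s-1}}x^{\b'}$ is already weakly decreasing: it is the PBW monomial $x^{\b}$ with $\b=\a-\g-\varepsilon_{i_s}+\b'$. Since $\b\le\a$ componentwise, automatically $\b\in\widetilde{\ZZ}_+^I$.

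Unwinding the induction: moving an element of the ideal $\F{p}$ rightward through an ordered word only ever passes a letter (with a sign) or absorbs it into an iterated bracket that stays in $\F{p}$. Hence
\[
(p-\phi(p))x^\a v=\sum_{\emptyset\neq T}\pm\,\phi(p_T)\,x^{\a-\sum_{t\in T}\varepsilon_{i_t}}v,
\]
where $T$ runs over nonempty sets of positions in the word $x^\a$, and $p_T$ is the iterated bracket of $p$ with the letters at those positions. No bracket of two $x$'s and no $y$ ever arises. The $|T|=1$ terms are your deletion terms. The $|T|\ge 2$ terms have degree at most $|\a|-2$ and exponent $\le\a$ componentwise. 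Replacing your straightening discussion with this observation completes the proof.
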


\begin{thm}
Let $\F{g}$ be a Lie superalgebra and let $\F{p}$ be a solvable ideal of $\F{g}$ satisfying
$[\F{p}_{\o{1}},\F{p}_{\o{1}}]\subseteq[\F{p}_{\o{0}},\F{p}_{\o{0}}]$. Let $V$ be an irreducible $\F{g}$-supermodule. Then the following statements are equivalent:
\begin{itemize}
	\item[(1)] $V$ is locally finite as a $\F{p}$-supermodule;
	\item[(2)] $V$ is a quasi-Whittaker $\F{g}$-supermodule.
\end{itemize}
\end{thm}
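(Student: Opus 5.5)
The plan is to prove the two implications separately: $(2)\Rightarrow(1)$ is a triangularity argument based on Lemma~\ref{L2.4}, and $(1)\Rightarrow(2)$ is a super-version of the Lie/Engel argument that produces a common homogeneous eigenvector for $\F{p}$.

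\emph{$(2)\Rightarrow(1)$.} Let $V=\C{U}(\F{g})w$ with $w$ a homogeneous quasi-Whittaker vector of type $\phi$. By the PBW description, $\C{U}(\F{g})=\sum_{\a}x^\a\,\C{U}(\F{g}^\phi)$. Every vector of $V$ is therefore a finite sum of vectors $x^\a v$ with $v=uw$, $u\in\C{U}(\F{g}^\phi)$. By Lemma~\ref{L2.3}, $V_\phi$ is a $\F{g}^\phi$-subsupermodule, so $v\in V_\phi$; we may also take $v$ homogeneous. Fix such $\a$ and $v$, and set
\[
M_{\a,v}=\r{Span}_{\CC}\{x^\b v\mid \b_i\le \a_i \text{ for all } i\in I\}.
\]
This space is finite-dimensional, because only finitely many $\b$ are dominated componentwise by $\a$. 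Lemma~\ref{L2.4}, applied to each $x^\b v$ with $\b$ dominated by $\a$, shows that $(p-\phi(p))x^\b v$ lies again in $M_{\a,v}$ for every homogeneous $p\in\F{p}$. The case $\b=0$ is immediate, since then $(p-\phi(p))v=0$. Hence $M_{\a,v}$ is a finite-dimensional $\F{p}$-stable subspace containing $x^\a v$, and so $V$ is locally finite over $\F{p}$.

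\emph{$(1)\Rightarrow(2)$.} Pick a nonzero homogeneous $v_0\in V$ and let $M=\C{U}(\F{p})v_0$. By assumption $M$ is a finite-dimensional $\F{p}$-supermodule. Set $\F{n}=[\F{p}_{\o0},\F{p}_{\o0}]\oplus\F{p}_{\o1}$. Using $[\F{p}_{\o0},\F{p}_{\o1}]\subseteq\F{p}_{\o1}$ and the hypothesis $[\F{p}_{\o1},\F{p}_{\o1}]\subseteq[\F{p}_{\o0},\F{p}_{\o0}]$, one checks that $\F{n}$ is a graded ideal of $\F{p}$ containing $[\F{p},\F{p}]$. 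The argument then proceeds in three steps.
\begin{itemize}
\item[(a)] \emph{Even part of $\F{n}$.} Since $\F{p}_{\o0}$ is solvable (because $\F{p}$ is), Lie's theorem applied to $M$ gives a flag on which $\F{p}_{\o0}$ acts by upper triangular matrices. Hence $[\F{p}_{\o0},\F{p}_{\o0}]$ acts nilpotently on $M$.
\item[(b)] \emph{Odd part of $\F{n}$.} Each odd $q\in\F{p}_{\o1}$ satisfies $q^2=\tfrac12[q,q]\in[\F{p}_{\o0},\F{p}_{\o0}]$, which acts nilpotently by (a). Therefore $q$ itself acts nilpotently on $M$.
\item[(c)] \emph{Common null vector.} By (a) and (b), every homogeneous element of $\F{n}$ acts nilpotently on $M$. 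The Engel theorem for Lie superalgebras then gives $M^{\F{n}}=\{m\in M\mid \F{n}m=0\}\neq 0$. This subspace is graded because $\F{n}$ is graded, and it is $\F{p}$-stable because $\F{n}$ is an ideal.
\end{itemize}

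On $M^{\F{n}}$ the algebra $\F{p}$ acts through $\F{p}/\F{n}\cong\F{p}_{\o0}/[\F{p}_{\o0},\F{p}_{\o0}]$, which is a purely even abelian algebra. These operators commute and preserve parity, so they have a common homogeneous eigenvector $w\in M^{\F{n}}$. Define $\phi(p)$ to be the eigenvalue of $p$ on $w$. Then $\phi$ vanishes on $\F{n}\supseteq[\F{p},\F{p}]+\F{p}_{\o1}$, and so $\phi$ is a Lie superalgebra homomorphism $\F{p}\to\CC$. Thus $w$ is a homogeneous quasi-Whittaker vector of type $\phi$, and $V=\C{U}(\F{g})w$ by irreducibility of $V$.

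I expect the main obstacle to be step (c). The point is that a finite-dimensional irreducible module over a solvable Lie superalgebra need not be one-dimensional, and the hypothesis $[\F{p}_{\o1},\F{p}_{\o1}]\subseteq[\F{p}_{\o0},\F{p}_{\o0}]$ is exactly what makes the odd elements nilpotent in (b). The delicate part is justifying the super-Engel statement. I would first try to prove it directly: by (a), $[\F{p}_{\o0},\F{p}_{\o0}]$ has a nonzero common null space $N_0$ in $M$. Every $q\in\F{p}_{\o1}$ preserves $N_0$, since $[[\F{p}_{\o0},\F{p}_{\o0}],\F{p}_{\o1}]\subseteq\F{p}_{\o1}$ and elements of $\F{p}_{\o1}$ square into $[\F{p}_{\o0},\F{p}_{\o0}]$. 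I would then run the ordinary Engel induction on the anticommuting nilpotent operators given by $\F{p}_{\o1}$ acting on $N_0$.
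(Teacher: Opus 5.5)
Your proof of $(2)\Rightarrow(1)$ is the same as the paper's. For $(1)\Rightarrow(2)$ you follow the paper's skeleton, but the paper gets a one-dimensional graded $\F{p}$-subsupermodule of $M=\C{U}(\F{p})v_0$ in one line from \cite[Lemma 1.37]{CW}. You instead reprove that lemma:
\begin{itemize}
\item Lie's theorem makes $[\F{p}_{\o{0}},\F{p}_{\o{0}}]$ act nilpotently;
\item the hypothesis $[\F{p}_{\o{1}},\F{p}_{\o{1}}]\subseteq[\F{p}_{\o{0}},\F{p}_{\o{0}}]$ then makes each odd element nilpotent;
\item the Engel theorem for Lie superalgebras gives $M^{\F{n}}\neq0$;
\item the purely even abelian quotient $\F{p}/\F{n}$ has a common homogeneous eigenvector on $M^{\F{n}}$.
\end{itemize}
This is correct, provided step (c) rests on citing the super Engel theorem: a finite-dimensional module on which all homogeneous elements act nilpotently has a nonzero common null vector. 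Your route shows exactly where the hypothesis on $[\F{p}_{\o{1}},\F{p}_{\o{1}}]$ enters. The paper's citation is shorter but hides this.

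The direct proof of (c) in your last paragraph does not work. The common null space $N_0$ of $[\F{p}_{\o{0}},\F{p}_{\o{0}}]$ need not be stable under $\F{p}_{\o{1}}$. For $m\in N_0$, $d\in[\F{p}_{\o{0}},\F{p}_{\o{0}}]$ and $q\in\F{p}_{\o{1}}$ you only get $d(qm)=[d,q]m$, and nothing forces the odd element $[d,q]$ to kill $m$. The two facts you quote do not address this.

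For a counterexample, let $\F{p}_{\o{0}}=\CC h\oplus\CC d$ and $\F{p}_{\o{1}}=\CC q_1\oplus\CC q_2$, with nonzero brackets $[h,d]=d$, $[h,q_2]=q_2$, $[d,q_1]=q_2$. This is a solvable Lie superalgebra with $[\F{p}_{\o{1}},\F{p}_{\o{1}}]=0$. In the four-dimensional supermodule $\r{Ind}_{\F{p}_{\o{0}}}^{\F{p}}\CC m$ with $\F{p}_{\o{0}}m=0$, we have $dm=0$ but $d(q_1m)=q_2m\neq0$.

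To make (c) self-contained, use one of the following instead:
\begin{itemize}
\item The operators on $M$ coming from $[\F{p}_{\o{0}},\F{p}_{\o{0}}]\cup\F{p}_{\o{1}}$ form a weakly closed set: the commutator of two of them, or the anticommutator when both are odd, is again of this form. They are all nilpotent, so by Jacobson's theorem they generate a nilpotent associative algebra, which gives $M^{\F{n}}\neq0$.
\item Run the classical Engel induction on the finite-dimensional image of $\F{n}$ in $\F{gl}(M)$.
\item Cite \cite[Lemma 1.37]{CW}, as the paper does.
\end{itemize}
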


\begin{proof}
$(1)\Rightarrow(2)$. Assume that $V$ is locally finite as a $\F{p}$-supermodule. Choose a nonzero homogeneous vector $v\in V$ and set $M=\C{U}(\F{p})v$. Then $M$ is a finite-dimensional $\F{p}$-supermodule. Since $\F{p}$ is solvable and satisfies $[\F{p}_{\o{1}},\F{p}_{\o{1}}]\subseteq[\F{p}_{\o{0}},\F{p}_{\o{0}}]$, \cite[Lemma 1.37]{CW} implies that $M$ contains a one-dimensional graded $\F{p}$-subsupermodule, say $\CC w$, with $w$ homogeneous. The action of $\F{p}$ on $\CC w$ defines a Lie superalgebra homomorphism $\phi\colon\F{p}\rightarrow\CC$ such that
\beqn
pw=\phi(p)w,\quad \forall p\in\F{p}.
\eeqn
Since $V$ is irreducible, $V=\C{U}(\F{g})w$. Thus $V$ is a quasi-Whittaker supermodule of type $\phi$.

$(2)\Rightarrow(1)$. Suppose that $V$ is a quasi-Whittaker supermodule of type $\phi$, generated by a homogeneous quasi-Whittaker vector $w$. For any $v\in V$, the PBW theorem gives
\beqn
v=\sum_{\a\in\widetilde{\ZZ}_{+}^{I},~\b\in\widetilde{\ZZ}_{+}^{J}}
c_{\a,\b}x^\a y^\b w,
\eeqn
where only finitely many coefficients $c_{\a,\b}\in\CC$ are nonzero. Lemmas \ref{L2.3} and \ref{L2.4} imply that
\beqn
\C{U}(\F{p})x^\a y^\b w
\subseteq\r{Span}_{\CC}\left\{x^{\a'}y^\b w\middle|
\a'\in\widetilde{\ZZ}_+^I,~\a'_i\leq\a_i~\text{for all }i\in I\right\}.
\eeqn
The space on the right is finite-dimensional. Since the PBW expansion of $v$ is finite, it follows that $\dim\C{U}(\F{p})v<\infty$. Thus $V$ is locally finite as a $\F{p}$-supermodule.
\end{proof}

Up to parity shift, it is enough in the sequel to consider quasi-Whittaker supermodule generated by an even cyclic quasi-Whittaker vector.
Let $\CC w_{\phi}$ be the one-dimensional $\F{p}$-supermodule defined by 
\beq
pw_{\phi}=\phi(p)w_{\phi}, ~\forall p\in\F{p}.
\eeq
The corresponding induced supermodule
\beq
W(\phi)=\r{Ind}_{\F{p}}^{\F{g}}\CC w_{\phi}
\eeq
is called the {\bf universal quasi-Whittaker supermodule} of type $\phi$. 

By the universal property of induction, if $V$ is a quasi-Whittaker $\F{g}$-supermodule of type $\phi$ generated by an even cyclic quasi-Whittaker vector $v$, then there is a unique $\F{g}$-supermodule homomorphism $\Phi\colon W(\phi)\rightarrow V$ such that $\Phi(w_{\phi})=v$. Since $v$ generates $V$, the homomorphism $\Phi$ is surjective.

We also use the symbol ``$>$" to denote the total order on $\widetilde{\ZZ}_+^I\times\widetilde{\ZZ}_+^J$ defined by 
\begin{align*}
(\a,\b)>(\a',\b')\Longleftrightarrow&  |\a|+|\b|>|\a'|+|\b'|;~\text{or~}  |\a|+|\b|=|\a'|+|\b'|~\text{but~} \a>\a';\\
&\text{or}~|\a|+|\b|=|\a'|+|\b'|~\text{and}~\a=\a'~\text{but~} \b>\b'.
\end{align*}

	\begin{prop}\label{P2.4}
	Each nonzero quasi-Whittaker vector in $W(\phi)$ is of type $\phi$.
\end{prop}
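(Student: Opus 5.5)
The statement means: if $0\neq w\in W(\phi)$ satisfies $pw=\psi(p)w$ for all $p\in\F{p}$, for some homomorphism $\psi\colon\F{p}\rightarrow\CC$, then $\psi=\phi$. The plan is to compare $\psi$ with $\phi$ on the leading PBW term of $w$ and use the triangularity in Lemma \ref{L2.4}.

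First, reduce to $\F{p}_{\o{0}}$. Since $\psi$ and $\phi$ are homomorphisms into the purely even $\CC$, both vanish on $\F{p}_{\o{1}}$. So it suffices to show $\psi(p)=\phi(p)$ for every $p\in\F{p}_{\o{0}}$. Fix such a $p$.

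Next, expand $w$ in the PBW basis of $W(\phi)$ over $\C{U}(\F{p})$:
\beqn
w=\sum c_{\a,\b}\,x^\a y^\b w_\phi .
\eeqn
Here $W(\phi)$ is free with basis $x^\a y^\b w_\phi$. Each vector $y^\b w_\phi$ lies in $W(\phi)_\phi$ by Lemma \ref{L2.3}, because $y^\b\in\C{U}(\F{g}^\phi)$. We can therefore apply Lemma \ref{L2.4} to each term $x^\a(y^\b w_\phi)$:
\beqn
(p-\phi(p))\,x^\a y^\b w_\phi\in \r{Span}_{\CC}\{x^{\a'}y^\b w_\phi\mid \a'<\a\}.
\eeqn
Indeed, Lemma \ref{L2.4} gives a leading term in $x^{\hat\a}$ plus terms with $\b'<\hat\a<\a$, and $\hat\a<\a$ since $|\hat\a|<|\a|$. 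For $\a=0$ the left side is zero.

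Now take the maximal pair $(\a,\b)$ with $c_{\a,\b}\neq0$ in the total order on $\widetilde{\ZZ}_+^I\times\widetilde{\ZZ}_+^J$. By the previous step, $(p-\phi(p))w$ is a combination of basis vectors $x^{\a'}y^{\b'}w_\phi$ with $(\a',\b')<(\a,\b)$: the total degree strictly drops, and the $y$-part is untouched. On the other hand,
\beqn
(p-\phi(p))w=(\psi(p)-\phi(p))w,
\eeqn
and the right side has coefficient $(\psi(p)-\phi(p))c_{\a,\b}$ at $x^\a y^\b w_\phi$. By linear independence of the PBW basis this coefficient must vanish, so $\psi(p)=\phi(p)$.

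The main obstacle is justifying that applying $p-\phi(p)$ strictly lowers the PBW order and introduces no new terms at the top. This is exactly what Lemma \ref{L2.4} provides, applied with $v=y^\b w_\phi\in W(\phi)_\phi$. One also has to check that the $y$-factors, which may be odd, cause no trouble. They do not: $y^\b w_\phi$ is treated as a single vector in $V_\phi$, and the resulting span stays inside the PBW basis.
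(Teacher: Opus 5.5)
Your proof is correct and follows the paper's argument: for even $p$, compare the coefficient of the leading PBW monomial $x^\a y^\b w_\phi$ on both sides of $(p-\phi(p))w=(\psi(p)-\phi(p))w$. The only difference is in justifying the degree drop. You apply Lemma \ref{L2.4} to $v=y^\b w_\phi\in W(\phi)_\phi$, whereas the paper writes $pw=([p,u]+up)w_\phi$ and uses implicitly that $[p,u]w_\phi$ has strictly lower degree because $\F{p}$ is an ideal.
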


\begin{proof}
Let $w=uw_{\phi}\in W(\phi)$ be a nonzero quasi-Whittaker vector of type $\psi$. Write
\beqn
w=ax^{\a}y^{\b}w_{\phi}+\sum_{(\a',\b')<(\a,\b)}a_{\a',\b'}x^{\a'}y^{\b'}w_{\phi},
\eeqn
where $a\neq0$ and only finitely many $a_{\a',\b'}$ are nonzero. For $p\in\F{p}_{\o{0}}$, the coefficient of $x^{\a}y^{\b}w_{\phi}$ in $pw=\psi(p)w$ is $a\psi(p)$. On the other hand,
\beqn
pw=\([p,u]+up\)w_{\phi},
\eeqn
and the coefficient of the same leading PBW monomial is $a\phi(p)$. Hence $\psi(p)=\phi(p)$ for every $p\in\F{p}_{\o{0}}$. Since both $\psi$ and $\phi$ vanish on $\F{p}_{\o{1}}$, it follows that $\psi=\phi$.
		\end{proof}

\section{Main results}

In this section, we first characterize the quasi-Whittaker vectors in the universal supermodule and then determine when this supermodule is irreducible. We also classify the irreducible quasi-Whittaker supermodules whose spaces of quasi-Whittaker vectors are one-dimensional.

Throughout this section, let $\F{g}$ be a Lie superalgebra, let $\F{p}$ be an ideal of $\F{g}$, and let $\phi\colon\F{p}\rightarrow\CC$ be a Lie superalgebra homomorphism.

\subsection{Quasi-Whittaker vectors in $W(\phi)$}

We first determine all quasi-Whittaker vectors in $W(\phi)$. Set
\beqn
\widetilde{\CC}[y]=\r{Span}_{\CC}\{y^\b\mid\b\in\widetilde{\ZZ}_+^J\}\subseteq\C{U}(\F{g}).
\eeqn
Then we have 
\beq\label{F3.1}
\C{U}(\F{g}^\phi)w_{\phi}=\widetilde{\CC}[y]w_{\phi}. 
\eeq
Let $\CC[t]=\CC[t_j\mid j\in J]$ be the polynomial algebra over $\CC$ in the variables $t_j$, $j\in J$. When $J=\emptyset$, we set $\CC[t]=\CC$.
 Denote by
\beqn
\widetilde{\CC}[t]=\r{Span}_{\CC}\{t^\b\mid \b\in\widetilde{\ZZ}_+^J\}.
\eeqn
Then there exists a linear isomorphism
\beq
\hat{\cdot }\colon \widetilde{\CC}[y]\rightarrow \widetilde{\CC}[t], \quad y^\beta\mapsto t^\beta \quad (\beta\in\widetilde{\ZZ}_{+}^J).
\eeq
By the PBW theorem, $W(\phi)$ has basis
\beqn
x^{\a}y^{\b}w_{\phi}, \quad\a\in\widetilde{\ZZ}_{+}^{I}, ~\b\in\widetilde{\ZZ}_{+}^{J}.
\eeqn
Hence, every nonzero element $v\in W(\phi)$ can be written uniquely as
\beq
v=\sum_{\a\in\widetilde{\ZZ}_{+}^{I}}x^{\a}f_{\a}w_{\phi}
\eeq
with only finitely many nonzero $f_{\a}\in\widetilde{\CC}[y]$. Assume that $|I|>0$. Define the {\bf support} of $v$ by
\beqn
\Supp(v)=\lbrace\a\in\widetilde{\ZZ}_{+}^{I}\mid f_{\a}\neq0\rbrace,
\eeqn
and define its degree by
\beqn
\deg(v)=\max\Supp(v)
\eeqn
with respect to the total order ``$>$" on $\widetilde{\ZZ}_{+}^{I}$.

\begin{thm}\label{T3.1}
For the universal quasi-Whittaker supermodule $W(\phi)$, we have
$W(\phi)_\phi=\widetilde{\CC}[y]w_\phi$.
\end{thm}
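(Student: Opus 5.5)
The inclusion $\widetilde{\CC}[y]w_\phi\subseteq W(\phi)_\phi$ comes directly from earlier results. By \eqref{F3.1}, $\widetilde{\CC}[y]w_\phi=\C{U}(\F{g}^\phi)w_\phi$. Also $w_\phi\in W(\phi)_\phi$, and by Lemma \ref{L2.3} $W(\phi)_\phi$ is a $\F{g}^\phi$-subsupermodule. Hence $\C{U}(\F{g}^\phi)w_\phi\subseteq W(\phi)_\phi$.

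For the reverse inclusion, if $I=\emptyset$ then $W(\phi)=\widetilde{\CC}[y]w_\phi$ and there is nothing to prove, so assume $|I|>0$. Take $0\neq v\in W(\phi)_\phi$ and, since $W(\phi)_\phi$ is $\ZZ_2$-graded (Lemma \ref{L2.3}), we may take $v$ homogeneous. Write $v=\sum_{\a}x^\a f_\a w_\phi$ and suppose, for contradiction, that $\a:=\deg(v)\neq0$. Let $k=\r{ht}(\a)$, so $\a_k\ge1$, and $\a_k=1$ if $x_k$ is odd. Because $x_k\notin\F{g}^\phi$ (the $x_i$ complete a basis of $\F{g}^\phi$ to one of $\F{g}$, so no nonzero combination of them lies in $\F{g}^\phi$), there is a homogeneous $p\in\F{p}$ with $\phi([p,x_k])\neq0$. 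Since $\phi$ vanishes on $\F{p}_{\o{1}}$, $[p,x_k]$ must be even, so $|p|=|x_k|$.

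Apply $p-\phi(p)$ to $v$; the result is $0$ since $v\in W(\phi)_\phi$. To compute it on each term $x^{\b}f_\b w_\phi$, note that $f_\b w_\phi$ is a sum of vectors in $W(\phi)_\phi$ (by the first paragraph). Lemma \ref{L2.4} therefore applies to each term with $v$ replaced by the homogeneous components $y^{\g}w_\phi\in W(\phi)_\phi$:
\beqn
(p-\phi(p))x^\b y^\g w_\phi\equiv \pm\b_{k'}\phi([p,x_{k'}])x^{\hat\b}y^\g w_\phi \quad \r{mod}~\r{Span}\{x^{\b'}y^{\g}w_\phi\mid \b'<\hat\b\},
\eeqn
where $k'=\r{ht}(\b)$, and terms with $\b=0$ are killed. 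The coefficient of $x^{\hat\a}$ coming from the $\a$-term is $\pm\a_k\phi([p,x_k])f_\a$, which is nonzero.

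The main obstacle is to check that no other $\b\in\Supp(v)$ contributes to $x^{\hat\a}$ in a way that cancels this. Let $\b\in\Supp(v)$ with $\b<\a$. By Lemma \ref{L2.1}, $\hat\b\le\hat\a$, with equality only if $\b=\hat\a+\varepsilon_j$ for some $j<k$. Lower-order terms $\b'<\hat\b\le\hat\a$ cannot reach $x^{\hat\a}$. In the equality case $\r{ht}(\b)=j$, so the $\b$-term contributes to $x^{\hat\a}$ with coefficient $\pm\b_j\phi([p,x_j])f_\b$.

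To avoid this cancellation, I would choose $\a$ and $k$ more cleverly. Work with the finite set $S=\{\hat\b\mid \b\in\Supp(v),\ \b\neq0\}$ and its maximal element $\g=\hat\a$. Among all $\b$ with $\hat\b=\g$, these are $\a$ together with the $\g+\varepsilon_j$, $j<k$. Viewing the coefficient of $x^\g$ as a linear form in $p$, namely
\beqn
p\mapsto\sum_j \pm\b_j\phi([p,x_j])f_{\g+\varepsilon_j},
\eeqn
it suffices to show this form is not identically zero on $\F{p}$.

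This follows from the linear independence of the $x_j$ modulo $\F{g}^\phi$. The map $x\mapsto\phi([\cdot,x])|_{\F{p}}$ is injective on $\r{Span}\{x_i\}$, so the functionals $\phi([\cdot,x_j])$ are linearly independent. Since the $f$'s lie in the free module $\widetilde{\CC}[y]$, with basis $y^\g w_\phi$, a nonzero combination stays nonzero coefficientwise. The signs are handled by restricting to homogeneous $p$ and comparing parities. Hence $(p-\phi(p))v\neq0$ for a suitable $p$, a contradiction. Therefore $\deg(v)=0$, that is, $v\in\widetilde{\CC}[y]w_\phi$.
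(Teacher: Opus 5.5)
Your proposal is correct and is essentially the paper's argument: take $\alpha=\deg(v)$, use Lemmas \ref{L2.1} and \ref{L2.4} to see that the $x^{\hat\alpha}$-component of $(p-\phi(p))v$ collects exactly the contributions of $\alpha$ and of the $\hat\alpha+\varepsilon_j$ with $j<k$, and then use that a nonzero combination of the $x_j$ lies outside $\F{g}^\phi$ to choose $p$ so that this component is nonzero. The only difference is cosmetic: you isolate the coefficient of a PBW monomial $y^\delta$ occurring in $f_\alpha$, whereas the paper evaluates the polynomials $\hat f$ at a point $\mathbf a$ with $\hat f_\alpha(\mathbf a)\neq0$; in both versions the Koszul sign is the same $(-1)^{|p||x^{\hat\alpha}|}$ for every contributing term, which is why the signs cause no trouble.
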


\begin{proof}

If $I=\emptyset$, then $\F{g}^\phi=\F{g}$. In this case, we have
\beqn
W(\phi)=\C{U}(\F{g})w_{\phi}=\C{U}(\F{g}^\phi)w_{\phi}\subseteq W(\phi)_\phi.
\eeqn
Hence, we have 
\beqn
W(\phi)_{\phi}=W(\phi)=\C{U}(\F{g}^\phi)w_{\phi}=\widetilde{\CC}[y]w_\phi.
\eeqn

Now suppose that $I\neq\emptyset$. The inclusion $\widetilde{\CC}[y]w_\phi\subseteq W(\phi)_\phi$ follows from Lemma \ref{L2.3} and \eqref{F3.1}. If the reverse inclusion fails, then there exists a nonzero vector $v\in W(\phi)_\phi$ with $\deg(v)=\a\neq0$. Let $k=\r{ht}(\a)$ and
\beqn
\ell=\min\left\{\r{ht}(\g)\middle|\g\in\Supp(v)\right\}.
\eeqn
Then $k\geq\ell$, and the set
\beqn
T=\left\{j\in I\middle|k\geq j\geq\ell,~\g_j\neq0
\text{ for some }\g\in\Supp(v)\right\}
\eeqn
is finite. Write $T=\{k=i_1>i_2>\cdots>i_m=\ell\}$ and set
\beqn
\a^{(t)}=\a-\varepsilon_k+\varepsilon_{i_t},\quad t=1,\ldots,m.
\eeqn
Then $\a=\a^{(1)}>\a^{(2)}>\cdots>\a^{(m)}$ are adjacent elements of
\beqn
\Supp(v)\cup\{\a^{(1)},\a^{(2)},\ldots,\a^{(m)}\}.
\eeqn
Consequently,
\beqn
v=x^\a f_\a w_\phi+\sum_{t=2}^m x^{\a^{(t)}}f_{\a^{(t)}}w_\phi
+\text{lower-degree terms},
\eeqn
where $f_\a\neq0$. Since $\hat f_\a\neq0$, we may choose
$\mathbf{a}=(a_j)_{j\in J}$ with $a_j\in\CC$ such that
$\hat f_\a(\mathbf{a})\neq0$. The vector
\beqn
\a_k\hat f_\a(\mathbf{a})x_k
+\sum_{t=2}^m\hat f_{\a^{(t)}}(\mathbf{a})x_{i_t}
\eeqn
is nonzero and lies in the span of the chosen complement of $\F{g}^\phi$ in $\F{g}$. Hence, there exists a homogeneous $p\in\F{p}$ such that
\beqn
\a_k\phi([p,x_k])\hat f_\a(\mathbf{a})
+\sum_{t=2}^m\phi([p,x_{i_t}])\hat f_{\a^{(t)}}(\mathbf{a})\neq0.
\eeqn
It follows that
\beqn
\a_k\phi([p,x_k])f_\a
+\sum_{t=2}^m\phi([p,x_{i_t}])f_{\a^{(t)}}\neq0.
\eeqn
Lemma \ref{L2.4} therefore gives
\beqn
\begin{aligned}
(p-\phi(p))v
={}&(-1)^{|p|\left|x^{\a-\a_k\varepsilon_k}\right|}x^{\hat\a}
\left(\a_k\phi([p,x_k])f_\a
+\sum_{t=2}^m\phi([p,x_{i_t}])f_{\a^{(t)}}\right)w_\phi\\
&+\text{lower-degree terms}.
\end{aligned}
\eeqn
In particular, $(p-\phi(p))v\neq0$, contradicting $v\in W(\phi)_\phi$. Therefore, $W(\phi)_\phi=\widetilde{\CC}[y]w_\phi$.

\end{proof}

The preceding proof also yields the following proposition.
\begin{prop}\label{P3.2}
Each nonzero $\F{p}$-invariant subspace of $W(\phi)$ contains a nonzero quasi-Whittaker vector of type $\phi$.
\end{prop}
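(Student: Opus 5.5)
Let $M\subseteq W(\phi)$ be a nonzero $\F{p}$-invariant subspace. The plan is a descent on degree: starting from any nonzero vector of $M$, we repeatedly apply operators $p-\phi(p)$ with $p\in\F{p}$, which keep us inside $M$, and strictly lower the degree until we land in $\widetilde{\CC}[y]w_\phi$. By Theorem \ref{T3.1}, every nonzero vector there is a quasi-Whittaker vector of type $\phi$. If $I=\emptyset$, then $W(\phi)=\widetilde{\CC}[y]w_\phi=W(\phi)_\phi$ and there is nothing to prove. So assume $|I|>0$.

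Choose $0\neq v\in M$ with $\deg(v)$ minimal in the well-ordered set $\widetilde{\ZZ}_+^I$; the order is well-founded, since each level $|\a|=n$ with finite support is well-ordered by the reverse-lexicographic comparison on the well-ordered index set. If $\deg(v)=0$, then $v\in\widetilde{\CC}[y]w_\phi$ and we are done. Otherwise let $\a=\deg(v)\neq0$, and run the argument in the proof of Theorem \ref{T3.1} verbatim. With $k=\r{ht}(\a)$, the finite set $T$, and the exponents $\a^{(t)}$ as there, we choose $\mathbf{a}$ with $\hat f_\a(\mathbf{a})\neq0$. We then use that $\a_k\hat f_\a(\mathbf{a})x_k+\sum_t\hat f_{\a^{(t)}}(\mathbf{a})x_{i_t}$ is a nonzero element of the complement of $\F{g}^\phi$. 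Hence it does not lie in $\F{g}^\phi$, and there is a homogeneous $p\in\F{p}$ with nonzero $\phi$-pairing.

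Lemma \ref{L2.4} then shows that
\beqn
(p-\phi(p))v=\pm x^{\hat\a}\Bigl(\a_k\phi([p,x_k])f_\a+\sum_{t=2}^m\phi([p,x_{i_t}])f_{\a^{(t)}}\Bigr)w_\phi+\text{lower-degree terms},
\eeqn
with nonzero leading coefficient. Thus $(p-\phi(p))v$ is a nonzero element of $M$ of degree $\hat\a<\a$, contradicting minimality. Hence $\deg(v)=0$, so $v\in\widetilde{\CC}[y]w_\phi=W(\phi)_\phi$ is a nonzero quasi-Whittaker vector, and by Proposition \ref{P2.4} (or directly) it is of type $\phi$.

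The main point to check is that the ``lower-degree terms'' really have degree below $\hat\a$. This requires Lemma \ref{L2.1} together with the adjacency of the $\a^{(t)}$ in the support. Lemma \ref{L2.1} says that contributions from any $\g\in\Supp(v)$ with $\g<\a$ produce $\hat\g\leq\hat\a$, with equality exactly for $\g=\a^{(t)}$. That equality case is accounted for in the displayed leading term, and the remaining terms from Lemma \ref{L2.4} are strictly below $\hat\g$. This is exactly the bookkeeping already carried out in the proof of Theorem \ref{T3.1}, so I would simply cite it rather than redo it.
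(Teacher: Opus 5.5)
Your proposal is correct and is essentially the paper's proof: take a nonzero $v\in M$ of minimal degree, then use the leading-term computation from the proof of Theorem~\ref{T3.1} to produce a nonzero vector $(p-\phi(p))v\in M$ of strictly smaller degree. The only difference is that the paper minimizes the integer $|\deg(v)|$ rather than $\deg(v)$ itself, which avoids needing the well-foundedness of the total order on $\widetilde{\ZZ}_+^I$ (your justification of that fact is correct anyway).
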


\begin{proof}
If $I$ is empty, then Theorem \ref{T3.1} gives $W(\phi)=W(\phi)_\phi$, and the assertion is immediate. 
Assume that $I\neq\emptyset$. Let $M$ be a nonzero $\F{p}$-invariant subspace of $W(\phi)$. Choose a nonzero vector $v\in M$ such that $|\deg(v)|$ is minimal among all nonzero elements of $M$. We claim that $v\in W(\phi)_\phi$. Suppose otherwise. Then $\a:=\deg(v)\neq0$, because the degree-zero subspace $\widetilde{\CC}[y]w_\phi$ is contained in $W(\phi)_\phi$. Using the notation $k,m,i_t$, and $f_{\a^{(t)}}$ from the proof of Theorem \ref{T3.1}, applied to $v$, we obtain a homogeneous element $p\in\F{p}$ such that
\beqn
\begin{aligned}
(p-\phi(p))v
={}&(-1)^{|p|\left|x^{\a-\a_k\varepsilon_k}\right|}x^{\hat\a}
\left(\a_k\phi([p,x_k])f_\a
+\sum_{t=2}^m\phi([p,x_{i_t}])f_{\a^{(t)}}\right)w_\phi\\
&+\text{lower-degree terms},
\end{aligned}
\eeqn
where $|\hat\a|=|\a|-1$. The vector $(p-\phi(p))v$ is nonzero and belongs to $M$, while
\beqn
|\deg((p-\phi(p))v)|=|\hat{\a}|=|\a|-1<|\deg(v)|,
\eeqn
This contradicts the choice of $v$. Therefore, $v\in W(\phi)_\phi$.
\end{proof}

\subsection{Irreducibility of $W(\phi)$}

We now discuss the irreducibility of $W(\phi)$.

\begin{lem}\label{L3.3}
If $W(\phi)$ is irreducible, then $\F{g}^\phi_{\o{0}}=\F{p}_{\o{0}}$.
\end{lem}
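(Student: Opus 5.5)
We argue by contraposition: assuming $\F{g}^\phi_{\o{0}}\neq\F{p}_{\o{0}}$, we produce a proper nonzero subsupermodule of $W(\phi)$. In that case there is an even basis vector $y_j$ ($j\in J$) with $|y_j|=\o{0}$. Fix one such index $j_0$ and write $y=y_{j_0}$. By Lemma \ref{L2.3} and Theorem \ref{T3.1}, $W(\phi)_\phi=\widetilde{\CC}[y]w_\phi$ is a $\F{g}^\phi$-subsupermodule of $W(\phi)$. We will study the even vector
\beqn
v=(y-c)w_\phi
\eeqn
for a suitable scalar $c\in\CC$ (for instance $c=0$), and take $N=\C{U}(\F{g})v$.

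First we check that $v$ is a quasi-Whittaker vector. Since $y\in\F{g}^\phi$, Lemma \ref{L2.3} gives $v\in W(\phi)_\phi$. Hence $N$ is a quasi-Whittaker subsupermodule, and it is nonzero because $yw_\phi$ and $w_\phi$ are linearly independent PBW basis vectors.

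The main step is to show that $w_\phi\notin N$, so that $N$ is proper. The plan is to use a filtration argument on PBW monomials. Every element of $N$ has the form $u(y-c)w_\phi$ with $u\in\C{U}(\F{g})$. Write $u=\sum_\a x^\a u_\a$ with $u_\a\in\C{U}(\F{g}^\phi)$. Then $u_\a(y-c)w_\phi$ is a vector of $\widetilde{\CC}[y]w_\phi$.

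It is cleaner to pass to the $\F{g}^\phi$-module $\C{U}(\F{g}^\phi)w_\phi\cong\mathrm{Ind}_{\F{p}}^{\F{g}^\phi}\CC w_\phi$. Because $\C{U}(\F{g})$ is free over $\C{U}(\F{g}^\phi)$ with basis $x^\a$, we have $W(\phi)=\bigoplus_\a x^\a\,\C{U}(\F{g}^\phi)w_\phi$ as vector spaces, and $W(\phi)\cong\mathrm{Ind}_{\F{g}^\phi}^{\F{g}}\(\C{U}(\F{g}^\phi)w_\phi\)$. Induction from $\F{g}^\phi$ to $\F{g}$ is exact and faithful, since the induced module is free over $\C{U}(\F{g}^\phi)$. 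Consequently a proper nonzero $\F{g}^\phi$-subsupermodule $L\subsetneq\C{U}(\F{g}^\phi)w_\phi$ induces a proper nonzero $\F{g}$-subsupermodule $\C{U}(\F{g})L\cong\mathrm{Ind}\,L$ of $W(\phi)$. Its properness follows because $\C{U}(\F{g})L=\bigoplus_\a x^\a L$ under the PBW decomposition.

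So it suffices to find a proper nonzero $\F{g}^\phi$-subsupermodule of $\C{U}(\F{g}^\phi)w_\phi$. Here $\F{p}$ acts on this space by the scalar $\phi$, by Lemma \ref{L2.3}, so it is a module over the quotient superalgebra $\bar{\F{g}}=\F{g}^\phi/\F{p}$. It is in fact the module $\C{U}(\bar{\F{g}})\otimes\CC_{\phi}$, which is isomorphic to $\C{U}(\bar{\F{g}})$ as a left regular module. This holds because $\phi$ extends to a character of $\F{g}^\phi$ that vanishes on a complement: the ideal $\ker$ of the induced action is generated by $p-\phi(p)$, which is two-sided in $\C{U}(\F{g}^\phi)$ since $\phi([\F{g}^\phi,\F{p}])=0$.

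Now $\bar{\F{g}}_{\o{0}}\neq0$, and we take the left ideal $\C{U}(\bar{\F{g}})\bar{\F{g}}$, the augmentation ideal. It is nonzero because it contains $\bar y\neq0$. It is proper because $1$ is not in it, by PBW. This gives the required $L$, namely the image of $\C{U}(\F{g}^\phi)\F{g}^\phi w_\phi$ adjusted by $\phi$.

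The step I expect to be the main obstacle is precisely this identification $\C{U}(\F{g}^\phi)w_\phi\cong\C{U}(\bar{\F{g}})$, together with checking that the augmentation ideal is proper and nonzero when only odd elements are present. When $\bar{\F{g}}$ is purely odd, the augmentation ideal of $\C{U}(\bar{\F{g}})$ is still proper, and this is why the lemma's conclusion cannot be strengthened to $\F{g}^\phi=\F{p}$. That suggests the argument must genuinely use the even element $\bar y$ in some finer way than properness of the augmentation ideal. If the augmentation-ideal argument turns out to be too strong, I would instead use $L=\C{U}(\bar{\F{g}})\bar y$ and show it is proper by a leading-term (PBW degree in $\bar y$) argument: the element $1$ has $\bar y$-degree $0$, while every element $u\bar y$ has a leading term of positive $\bar y$-degree, since $\bar y$ is even and not nilpotent in $\C{U}(\bar{\F{g}})$.
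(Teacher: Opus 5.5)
\textbf{There is a genuine gap.} It is the identification $\C{U}(\F{g}^\phi)w_\phi\cong\C{U}(\bar{\F{g}})$, where $\bar{\F{g}}=\F{g}^\phi/\F{p}$.

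The ideal $\F{p}$ acts on $\C{U}(\F{g}^\phi)w_\phi$ by the scalars $\phi(p)$, not by $0$. So the action descends to $\bar{\F{g}}$ only after twisting by a character of $\F{g}^\phi$ that extends $\phi$. Such a character need not exist; this is exactly the extendability condition the paper isolates later.

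Your two-sided ideal $I_\phi$ generated by the elements $p-\phi(p)$ is correct. But $A_\phi=\C{U}(\F{g}^\phi)/I_\phi$ is in general only a twisted enveloping algebra of $\bar{\F{g}}$: brackets of lifts pick up the scalars $\phi$ of their $\F{p}$-components. In the paper's $\F{osp}(1|2)$ example, $\bar{\F{g}}=\CC\bar x$ is odd with zero bracket. Yet $A_\phi\cong\CC[x]/(x^2-1)$, not $\CC[x]/(x^2)$. There every nonzero homogeneous element is invertible, so there is no proper nonzero graded left ideal, and indeed $W(\phi)$ is irreducible by Theorem~\ref{T3.4}(2).

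So the augmentation-ideal argument does not ``prove too much'' because the tool is too coarse; it is applied to the wrong algebra. Your fallback $L=\C{U}(\bar{\F{g}})\bar y$ is still formulated in $\C{U}(\bar{\F{g}})$ and carried back through the same false isomorphism. As written it proves nothing about $W(\phi)$ either.

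\textbf{How to repair it.} Your induction reduction is sound: $\C{U}(\F{g})L=\bigoplus_\a x^\a L$ for any $\F{g}^\phi$-subsupermodule $L$ of $\C{U}(\F{g}^\phi)w_\phi$. Your abandoned first attempt with $c=0$ was also the right one.

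To finish it, order the basis \eqref{F2.3} so that the even element $y=y_{j_0}$ has the smallest index in $J$. Then it is the rightmost factor of every PBW monomial. Since $\F{p}$ acts on $yw_\phi$ by $\phi$ (Lemma~\ref{L2.3}), $\C{U}(\F{g})yw_\phi$ is spanned by the vectors $x^\a y^\b yw_\phi=x^\a y^{\b+\varepsilon_{j_0}}w_\phi$, none of which is $w_\phi$. This gives a nonzero proper subsupermodule, and it is the paper's proof.

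Alternatively, your leading-term idea works if you run it inside $A_\phi$ rather than $\C{U}(\bar{\F{g}})$. The PBW filtration on $A_\phi$ has associated graded algebra equal to the supersymmetric algebra of $\bar{\F{g}}$. There the even, nonzero $\bar y$ is a non-zero-divisor, so for $u\neq 0$ the product $uy$ has filtration degree exactly $\deg u+1\geq 1$ and cannot equal $1$. For odd $y$ this fails, since $y^2w_\phi=\tfrac12\phi([y,y])w_\phi$ may be a nonzero multiple of $w_\phi$. That is where evenness is actually used.
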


\begin{proof}
Suppose, to the contrary, that $\F{g}^\phi_{\o{0}}\neq\F{p}_{\o{0}}$. Then $J\neq\emptyset$, and we may choose the basis so that $y_{j_0}\in\F{g}^\phi_{\o{0}}$, where $j_0$ is the smallest element of $J$. The PBW theorem gives
	  \beqn
	  \C{U}(\F{g})y_{j_0}w_{\phi}=\r{Span}_{\CC}\{~x^\a y^{\b+\varepsilon_{j_0}}w_\phi\mid \a\in \widetilde{\ZZ}_+^I,~\b\in \widetilde{\ZZ}_+^J~\}.
	  \eeqn 
In particular, $w_\phi\notin\C{U}(\F{g})y_{j_0}w_{\phi}$. Thus $\C{U}(\F{g})y_{j_0}w_{\phi}$ is a nonzero proper subsupermodule of $W(\phi)$, a contradiction.
\end{proof}

\begin{thm}\label{T3.4}
The universal quasi-Whittaker supermodule $W(\phi)$ is irreducible if and only if one of the following conditions holds:
\begin{itemize}
\item[(1)] $\F{g}^\phi=\F{p}$;
\item[(2)] $\F{g}_{\o{0}}^\phi=\F{p}_{\o{0}}$, $\dim\F{g}_{\o{1}}^\phi/\F{p}_{\o{1}}=1$, and $\phi\bigl([\F{g}_{\o{1}}^\phi,\F{g}_{\o{1}}^\phi]\bigr)\neq0$.
\end{itemize}
\end{thm}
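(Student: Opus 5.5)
The plan is to combine Proposition \ref{P3.2} with Theorem \ref{T3.1}. Together they reduce irreducibility of $W(\phi)$ to a question about $\F{g}^\phi$ acting on $\widetilde{\CC}[y]w_\phi$. Let $N$ be a nonzero subsupermodule of $W(\phi)$. Then $N$ is $\F{p}$-invariant, so Proposition \ref{P3.2} gives a nonzero $v\in N\cap W(\phi)_\phi$, and Theorem \ref{T3.1} says $v\in\widetilde{\CC}[y]w_\phi$. Since $W(\phi)$ is generated by $w_\phi$, it is irreducible if and only if every nonzero $v\in\widetilde{\CC}[y]w_\phi$ satisfies $w_\phi\in\C{U}(\F{g})v$. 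It suffices to show $w_\phi\in \C{U}(\F{g}^\phi)v$. Here $\widetilde{\CC}[y]w_\phi=\C{U}(\F{g}^\phi)w_\phi\cong \C{U}(\F{g}^\phi)\otimes_{\C{U}(\F{p})}\CC w_\phi$ by \eqref{F3.1}.

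\textbf{Sufficiency.} In case (1), $J=\emptyset$, so $W(\phi)_\phi=\CC w_\phi$ and every nonzero $v$ is a scalar multiple of $w_\phi$. In case (2), $J=\{j\}$ with $y:=y_j$ odd, so $\widetilde{\CC}[y]w_\phi=\CC w_\phi\oplus\CC yw_\phi$. We compute $y^2=\frac12[y,y]\in\F{g}^\phi_{\o{0}}=\F{p}_{\o{0}}$. Because $\F{g}^\phi_{\o{1}}=\F{p}_{\o{1}}+\CC y$, we have $[\F{g}^\phi_{\o{1}},\F{g}^\phi_{\o{1}}]\subseteq[y,y]\CC+\F{p}$. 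Also $\phi([y,\F{p}_{\o{1}}])=0$ because $y\in\F{g}^\phi$, and $\phi([\F{p},\F{p}])=0$. So the hypothesis forces $c:=\phi([y,y])\neq0$, and hence $y\cdot yw_\phi=\frac c2 w_\phi$. Now take nonzero $v=aw_\phi+byw_\phi$. If $b=0$ we are done. Otherwise $\bigl(y-\frac{a}{b}\cdot\bigr)$-type manipulations work: $yv=ayw_\phi+\frac{bc}{2}w_\phi$, and a linear combination of $v$ and $yv$ recovers $w_\phi$ unless the determinant $\frac{bc}{2}\cdot a\ldots$ vanishes. More cleanly, the $2\times2$ matrix of $y$ on $\CC w_\phi\oplus \CC yw_\phi$ is $\begin{pmatrix}0&c/2\\1&0\end{pmatrix}$, which is invertible. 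The span is therefore $\CC[y]$-cyclic from any nonzero vector provided $v$ is not an eigenvector. Here a homogeneity argument helps: $N$ is graded, so we may take $v$ homogeneous, i.e.\ $v\in\CC w_\phi$ or $v\in\CC yw_\phi$. In the latter case $yv=\frac{bc}2w_\phi\neq0$. Hence $W(\phi)$ is irreducible.

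\textbf{Necessity.} Lemma \ref{L3.3} already gives $\F{g}^\phi_{\o{0}}=\F{p}_{\o{0}}$, so all $y_j$ are odd. If $\dim \F{g}^\phi_{\o{1}}/\F{p}_{\o{1}}\geq2$, I would exhibit a proper submodule. The symmetric form $B(y,y')=\phi([y,y'])$ on $\F{g}^\phi_{\o{1}}/\F{p}_{\o{1}}$ is well defined by the remarks above. I would choose the basis $y_j$ so that it is either isotropic at the first vector or diagonalizes $B$. Then consider the vector $v=y_{j_1}y_{j_2}\cdots$, taken as a top-degree product, or more simply $v=y_{j_1}w_\phi+\lambda y_{j_1}y_{j_2}y_{j_3}\ldots$. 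The expected obstacle is to find a $v\in\widetilde{\CC}[y]w_\phi$ with $w_\phi\notin\C{U}(\F{g})v$. The cleanest candidate is the top element $v=y_{j_1}\cdots y_{j_n}w_\phi$ when $J$ is finite. Alternatively, pick two odd $y,y'$ with $B(y,y')$ arranged so that $(y+iy')$ is $B$-isotropic. Then $z=y+iy'$ satisfies $z^2=\frac12[z,z]\in\F{p}_{\o{0}}$ with $\phi(z^2)=0$, so $z^2w_\phi=0$. Following the proof of Lemma \ref{L3.3}, with $z$ placed at the smallest index, $\C{U}(\F{g})zw_\phi$ is spanned by $x^\a y^{\b}zw_\phi$. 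Using $z^2w_\phi=0$ and the PBW basis, this excludes $w_\phi$ and is proper. If instead $\dim=1$ and $\phi([y,y])=0$, the same isotropic argument applies with $z=y$. This isotropic-vector construction is the key step; the main care needed is checking that $z$ can be taken as a basis element of the complement and that $z^2\in\F{p}$ acts by $0$, so that the PBW span truly misses $w_\phi$.
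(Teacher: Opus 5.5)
Your proposal is correct and is essentially the paper's own proof. Sufficiency goes via Proposition~\ref{P3.2}, Theorem~\ref{T3.1}, the grading of $N\cap W(\phi)_\phi$ and $y^2w_\phi=\frac12\phi([y,y])w_\phi$; necessity goes via Lemma~\ref{L3.3} plus an isotropic vector $z$ of $(\cdot,\cdot)_\phi$ placed at the smallest index of $J$, so that $z^2w_\phi=0$ and $\C{U}(\F{g})zw_\phi$ misses $w_\phi$.

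Discard two side remarks, which are false but not used in your final argument:
\begin{itemize}
\item The opening ``if and only if'' must be restricted to homogeneous $v$. In case (2), the $y$-eigenvector $\lambda w_\phi+yw_\phi$ with $\lambda^2=\frac12\phi([y,y])$ generates a proper, non-graded submodule.
\item The ``top element'' candidate fails in general. For an orthogonal basis of a nondegenerate form it generates all of $W(\phi)$, so keep only the isotropic construction.
\end{itemize}
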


\begin{proof}
We first prove sufficiency. Let $M$ be a nonzero subsupermodule of $W(\phi)$. By Proposition \ref{P3.2}, $M\cap W(\phi)_\phi\neq0$. If (1) holds, Theorem \ref{T3.1} gives
\beqn
W(\phi)_\phi=\CC w_{\phi}.
\eeqn
and hence $w_\phi\in M$. If (2) holds, write $\F{g}^\phi_{\o{1}}=\F{p}_{\o{1}}\oplus\CC y$, where $\phi([y,y])\neq0$. By Theorem \ref{T3.1},
 \beqn
 W(\phi)_\phi=\CC w_{\phi}\oplus\CC yw_{\phi}.  
 \eeqn
 Since $M$ is $\ZZ_2$-graded, we have $w_{\phi}\in M$ or $yw_{\phi}\in M$. If $yw_{\phi}\in M$, we have
 \beqn
 w_{\phi}=\frac{2}{\phi([y,y])}y^2w_{\phi}\in M.
 \eeqn
In either case, $w_{\phi}\in M$, so $M=W(\phi)$. Thus $W(\phi)$ is irreducible.
 
We now prove necessity. Assume that $W(\phi)$ is irreducible. By Lemma \ref{L3.3}, $\F{g}^\phi_{\o{0}}=\F{p}_{\o{0}}$. Hence, $\phi$ induces a symmetric bilinear form
 \beqn
 (y_1+\F{p}_{\o{1}},y_2+\F{p}_{\o{1}})_\phi:=\phi([y_1,y_2]),\quad \forall y_1,~y_2\in \F{g}^{\phi}_{\o{1}}
 \eeqn
 on $\F{g}^{\phi}_{\o{1}}/\F{p}_{\o{1}}$.
This bilinear form is well defined: for $p_1,p_2\in\F{p}_{\o{1}}$ and $y_1,y_2\in\F{g}_{\o{1}}^\phi$,
\beqn
\phi\([y_1 + p_1,\, y_2 + p_2]\)
= \phi\([y_1,y_2]\) + \phi\([y_1,p_2]\) + \phi\([p_1,y_2]\) + \phi\([p_1,p_2]\)=\phi\([y_1,y_2]\).
\eeqn
The two middle terms vanish by the definition of $\F{g}^\phi$, while the last term vanishes because $\phi$ is a Lie superalgebra homomorphism and $\CC$ is abelian.

We claim that $\F{g}^{\phi}_{\o{1}}/\F{p}_{\o{1}}$ contains no nonzero isotropic vector with respect to $(\cdot,\cdot)_{\phi}$. Otherwise, $J$ is nonempty, and we may choose its smallest-indexed basis element $y_{j_0}\in\F{g}^\phi_{\o{1}}$ so that $\phi([y_{j_0},y_{j_0}])=0$. Then
\beqn
y_{j_0}^2w_{\phi}=\frac{1}{2}[y_{j_0},y_{j_0}]w_\phi=\frac{1}{2}\phi\([y_{j_0},y_{j_0}]\)w_\phi=0.
\eeqn 
Hence, for each $\a\in\widetilde{\ZZ}_+^I$ and $\b\in\widetilde{\ZZ}_+^J$,
 \beqn
 x^\a y^\b y_{j_0}w_{\phi}=\begin{cases}
0,\quad & \text{if $\b_{j_0}=1$};\\
 x^\a y^{\b+\varepsilon_{j_0}} w_{\phi},\quad& \text{if $\b_{j_0}=0$}
\end{cases}
 \eeqn 
  It follows that 
\beqn
\C{U}(\F{g})y_{j_0}w_{\phi}=\r{Span}_\CC\{~x^\a y^\b w_{\phi}\mid~\a\in \widetilde{\ZZ}_+^I~\text{and}~\b\in \widetilde{\ZZ}_+^J~\text{with}~\b_{j_0}= 1\}.
\eeqn
In particular, $w_{\phi}\notin\C{U}(\F{g})y_{j_0}w_{\phi}$. Hence $\C{U}(\F{g})y_{j_0}w_{\phi}$ is a nonzero proper subsupermodule of $W(\phi)$, contradicting irreducibility. Therefore, $\F{g}^{\phi}_{\o{1}}/\F{p}_{\o{1}}$ contains no nonzero isotropic vector. Every symmetric bilinear form on a complex linear space of dimension at least two has a nonzero isotropic vector; consequently,
\beqn
\dim \F{g}^{\phi}_{\o{1}}/\F{p}_{\o{1}}\le 1.
\eeqn
If $\dim\F{g}^{\phi}_{\o{1}}/\F{p}_{\o{1}}=0$, then $\F{g}_{\o{1}}^\phi=\F{p}_{\o{1}}$, and together with $\F{g}_{\o{0}}^\phi=\F{p}_{\o{0}}$ this gives (1). If the dimension is $1$, the absence of nonzero isotropic vectors implies $\phi([\F{g}_{\o{1}}^\phi,\F{g}_{\o{1}}^\phi])\neq0$, so (2) holds.
 \end{proof}
 
\begin{ex}
Let $\F{g}$ be the positive subalgebra of $\F{osp}(1|2)$. Then 
\beqn
\F{g}_{\o{0}}=\CC h\oplus\CC e,\quad \F{g}_{\o{1}}=\CC x
\eeqn
with nonzero brackets
\beqn
[h,e]=2e,\quad [h,x]=x,\quad [x,x]=2e.
\eeqn
The subspace $\F{p}=\CC e$ is an ideal of $\F{g}$. Let $\phi\colon\F{p}\rightarrow\CC$ be defined by $\phi(e)=1$. Then $\phi$ is a Lie superalgebra homomorphism, and a direct computation gives
\beqn
\F{g}^\phi=\CC e\oplus\CC x.
\eeqn	
Since $\phi([x,x])=2\neq0$, condition (2) of Theorem \ref{T3.4} holds. Hence $W(\phi)$ is irreducible.
\end{ex}

\subsection{Bland quasi-Whittaker supermodules}

\begin{Def}
Let $V$ be an irreducible quasi-Whittaker supermodule of type $\phi$. If $\dim V_\phi=1$, then $V$ is said to be {\bf bland}.	
\end{Def}

\begin{Def}
The Lie superalgebra homomorphism $\phi\colon\F{p}\rightarrow\CC$ is called {\bf extendable} if it extends to a Lie superalgebra homomorphism $\phi'\colon\F{g}^\phi\rightarrow\CC$.
\end{Def}

Assume that $\phi$ is extendable and $\phi'\colon \F{g}^\phi\rightarrow \CC$ is an extension of $\phi$. Define a one-dimensional $\F{g}^\phi$-supermodule $\CC v_{\phi'}$ by 
\beqn
yv_{\phi'}=\phi'(y)v_{\phi'},\quad \forall y\in\F{g}^\phi.
\eeqn
Set 
\beqn
V(\phi')=\r{Ind}_{\F{g}^\phi}^\F{g}\CC v_{\phi'}.
\eeqn 
The supermodule $V(\phi')$ is a quasi-Whittaker supermodule of type $\phi$ generated by the cyclic quasi-Whittaker vector $v_{\phi'}$. Hence, there exists a unique surjective $\F{g}$-supermodule homomorphism $\Phi\colon W(\phi)\rightarrow V(\phi')$ such that $\Phi(w_\phi)=v_{\phi'}$. Set
\beqn
W(\phi)_+=\r{Span}_\CC\{x^\a w_\phi\mid \a\in\widetilde{\ZZ}^I_+\}.
\eeqn
Then $W(\phi)_+$ is a $\F{p}$-invariant subspace of $W(\phi)$. Moreover, the restriction
\beqn
\Phi|_{W(\phi)_+}\colon W(\phi)_+\rightarrow V(\phi')
\eeqn
is a $\F{p}$-supermodule isomorphism.

\begin{lem}\label{L5}
	Under the assumptions above, the $\F{g}$-supermodule $V(\phi')$ is irreducible.
\end{lem}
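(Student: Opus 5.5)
The plan is to run the same ``lowering'' argument used in the proofs of Theorem \ref{T3.1} and Proposition \ref{P3.2}, but transported to $V(\phi')$ through the $\F{p}$-supermodule isomorphism $\Phi|_{W(\phi)_+}\colon W(\phi)_+\to V(\phi')$. By the PBW theorem, $V(\phi')$ has basis $x^\a v_{\phi'}$ with $\a\in\widetilde{\ZZ}_+^I$, and $\Phi(x^\a w_\phi)=x^\a v_{\phi'}$. If $I=\emptyset$, then $V(\phi')=\CC v_{\phi'}$ is one-dimensional and there is nothing to prove. So assume $I\neq\emptyset$.

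Let $M$ be a nonzero subsupermodule of $V(\phi')$. The preimage $N=(\Phi|_{W(\phi)_+})^{-1}(M)$ is a nonzero $\F{p}$-invariant subspace of $W(\phi)_+$, hence of $W(\phi)$.

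The key step is to show that $N\cap W(\phi)_\phi\neq0$, with the nonzero element lying in $W(\phi)_+$. I would repeat the proof of Proposition \ref{P3.2} inside $W(\phi)_+$.

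1. Choose $0\neq v\in N$ with $|\deg(v)|$ minimal.
2. Elements of $W(\phi)_+$ have all $f_\a\in\CC$ (constants), so the evaluation at a point $\mathbf{a}$ is trivial.
3. Suppose $\deg(v)=\a\neq0$. The argument of Theorem \ref{T3.1} produces a homogeneous $p\in\F{p}$ such that $(p-\phi(p))v$ has leading term a nonzero scalar times $x^{\hat\a}w_\phi$, plus lower terms.
4. By Lemma \ref{L2.4}, all these lower terms are again of the form $x^\b w_\phi$. So $(p-\phi(p))v$ is a nonzero element of $N\subseteq W(\phi)_+$ of strictly smaller degree, contradicting minimality.

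Hence $\deg(v)=0$. This means $v$ is a nonzero scalar multiple of $w_\phi$, and therefore $v_{\phi'}=\Phi(w_\phi)\in M$ up to that scalar. Since $v_{\phi'}$ generates $V(\phi')$, we get $M=V(\phi')$, which proves irreducibility.

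The main obstacle is justifying that the minimal-degree argument stays within $W(\phi)_+$. The issue is that Lemma \ref{L2.4} is stated for $v\in V_\phi$ in an arbitrary supermodule $V$. I would apply it directly in $V(\phi')$ with $v=v_{\phi'}\in V(\phi')_\phi$, so that everything is expressed in the basis $x^\b v_{\phi'}$ and never leaves it. Working directly in $V(\phi')$ in this way avoids the $y$-factors altogether; the transfer through $\Phi$ is then only needed to see that the modulo-terms are spanned by the $x^\b v_{\phi'}$.

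Within this direct approach, the nonvanishing of the leading coefficient
\[
\a_k\phi([p,x_k])c_\a+\sum_{t=2}^m\phi([p,x_{i_t}])c_{\a^{(t)}}
\]
is guaranteed by the following observation. The vector $\a_k c_\a x_k+\sum_{t\geq2} c_{\a^{(t)}}x_{i_t}$ is a nonzero element of the span of the $x_i$, hence lies outside $\F{g}^\phi$. Therefore some $p\in\F{p}$ satisfies $\phi([p,\cdot])\neq0$ on it, and $p$ may be taken homogeneous since $\F{g}^\phi$ is graded.
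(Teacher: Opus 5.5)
Your proposal is correct and follows the paper's route: pull $M$ back along the $\F{p}$-isomorphism $\Phi|_{W(\phi)_+}$, then use the degree-lowering argument to reach a nonzero multiple of $w_\phi$. The paper just cites Proposition \ref{P3.2}, which applies verbatim because the preimage already lies in $W(\phi)_+$ and is $\F{p}$-invariant, together with $W(\phi)_+\cap W(\phi)_\phi=\CC w_\phi$ from Theorem \ref{T3.1}. So re-deriving that argument, and the ``main obstacle'' you flag, are unnecessary but harmless.
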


\begin{proof}
Let $M$ be a nonzero subsupermodule of $V(\phi')$. Since $\Phi|_{W(\phi)_+}$ is an isomorphism, $\Phi^{-1}(M)\cap W(\phi)_+$ is a nonzero $\F{p}$-invariant subspace of $W(\phi)_+$. By Proposition \ref{P3.2}, it contains a nonzero quasi-Whittaker vector. Theorem \ref{T3.1} gives
\beqn
W(\phi)_+\cap W(\phi)_\phi=\CC w_\phi.
\eeqn
Thus $w_\phi\in\Phi^{-1}(M)\cap W(\phi)_+$, so $v_{\phi'}=\Phi(w_\phi)\in M$. Therefore, $M=V(\phi')$.
\end{proof}

\begin{thm}\label{T3.6}
There exists a bland quasi-Whittaker $\F{g}$-supermodule of type $\phi$ if and only if $\phi$ is extendable. In this case, the correspondence $\phi'\mapsto V(\phi')$ gives a bijection between the extensions of $\phi$ and the isomorphism classes, up to parity shift, of bland irreducible quasi-Whittaker $\F{g}$-supermodules of type $\phi$.
\end{thm}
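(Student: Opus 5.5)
The plan is to prove the two directions of the equivalence separately and then establish the bijection.

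\emph{Sufficiency and well-definedness of the map.} If $\phi$ is extendable with extension $\phi'$, then $V(\phi')$ is irreducible by Lemma \ref{L5}, and it is quasi-Whittaker of type $\phi$. To see that it is bland, compute $V(\phi')_\phi$. Because $\Phi|_{W(\phi)_+}\colon W(\phi)_+\to V(\phi')$ is an isomorphism of $\F{p}$-supermodules, $V(\phi')_\phi$ corresponds to $W(\phi)_+\cap\{v\mid pv=\phi(p)v\ \forall p\in\F{p}\}$. This intersection contains $W(\phi)_+\cap W(\phi)_\phi$, and by Theorem \ref{T3.1} it equals $\CC w_\phi$. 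Hence $\dim V(\phi')_\phi=1$.

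\emph{Necessity.} Let $V$ be a bland quasi-Whittaker supermodule of type $\phi$, generated by a homogeneous vector $v$. After a parity shift we may take $v$ even. By Lemma \ref{L2.3}, $V_\phi=\CC v$ is a $\F{g}^\phi$-subsupermodule. So each homogeneous $y\in\F{g}^\phi$ acts by $yv=\phi'(y)v$ for a scalar $\phi'(y)$. Since $v$ is even, an odd $y$ sends $v$ into $V_{\o{1}}\cap\CC v=0$, so $\phi'$ vanishes on $\F{g}^\phi_{\o{1}}$. The map $\phi'$ is linear and satisfies $\phi'([y_1,y_2])v=(y_1y_2\mp y_2y_1)v=0$ whenever one of $y_1,y_2$ is odd. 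For even $y_1,y_2$ the commutator of scalars is zero. Therefore $\phi'$ is a Lie superalgebra homomorphism into the purely even $\CC$, and it extends $\phi$. This proves $\phi$ is extendable.

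\emph{Bijection.} For surjectivity, the universal property of induction gives a nonzero homomorphism $V(\phi')\to V$ sending $v_{\phi'}\mapsto v$. It is surjective because $v$ generates $V$, and injective because $V(\phi')$ is irreducible. Hence $V\cong V(\phi')$ up to parity shift. For injectivity, suppose $V(\phi'_1)\cong V(\phi'_2)$ up to parity shift. The isomorphism maps the one-dimensional space $V(\phi'_1)_\phi=\CC v_{\phi'_1}$ onto $\CC v_{\phi'_2}$. Since $\F{g}^\phi$ acts on these lines by $\phi'_1$ and $\phi'_2$ respectively, we get $\phi'_1=\phi'_2$. A parity shift does not change the even scalars, and the odd part of both maps is zero anyway.

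The main obstacle is the blandness computation, specifically justifying that the quasi-Whittaker vectors of $V(\phi')$ are only the scalar multiples of $v_{\phi'}$. The subtle point is that $\Phi$ is only a $\F{p}$-isomorphism on $W(\phi)_+$, not a $\F{g}$-map on all of $W(\phi)$. So I would check carefully that the isomorphism intertwines $p-\phi(p)$, which it does, and then apply Theorem \ref{T3.1}'s description $W(\phi)_\phi=\widetilde{\CC}[y]w_\phi$ intersected with $W(\phi)_+$.
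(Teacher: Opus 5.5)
Your proposal is correct and follows essentially the same route as the paper. Blandness of $V(\phi')$ comes from the $\F{p}$-isomorphism $\Phi|_{W(\phi)_+}$ together with Theorem \ref{T3.1}, and $\phi'$ is read off from the $\F{g}^\phi$-action on $V_\phi=\CC v$. Lemma \ref{L5} then upgrades the induced surjection $V(\phi')\to V$ to an isomorphism, and injectivity comes from the scalars on the quasi-Whittaker line. You also fill in points the paper leaves implicit: why $\phi'$ vanishes on $\F{g}^\phi_{\o{1}}$ and is a homomorphism, and why a parity shift is harmless. One small wording slip: $\Phi$ is in fact a $\F{g}$-homomorphism on all of $W(\phi)$; it is merely not injective outside $W(\phi)_+$.
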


\begin{proof}
Suppose first that $\phi'\colon\F{g}^\phi\rightarrow\CC$ is an extension of $\phi$. Lemma \ref{L5} shows that $V(\phi')$ is irreducible. Moreover, since $\Phi|_{W(\phi)_+}$ is a $\F{p}$-supermodule isomorphism, Theorem \ref{T3.1} gives
\beqn
V(\phi')_\phi=\Phi\bigl(W(\phi)_+\cap W(\phi)_\phi\bigr)=\CC v_{\phi'}.
\eeqn
Thus $V(\phi')$ is bland.

Conversely, let $V$ be a bland irreducible quasi-Whittaker supermodule generated by a homogeneous quasi-Whittaker vector $v$. Up to parity shift, assume that $v$ is even. By Lemma \ref{L2.3},
\beqn
\C{U}(\F{g}^\phi)v\subseteq V_\phi=\CC v.
\eeqn
The action of $\F{g}^\phi$ on $\CC v$ therefore defines a Lie superalgebra homomorphism $\phi'\colon\F{g}^\phi\rightarrow\CC$ extending $\phi$. The universal property of induction yields a surjective $\F{g}$-supermodule homomorphism $V(\phi')\rightarrow V$, which is an isomorphism by Lemma \ref{L5}.

Finally, suppose that $\phi'$ and $\phi''$ are extensions of $\phi$ and that $V(\phi')\cong V(\phi'')$. Any $\F{g}$-supermodule isomorphism maps $V(\phi')_\phi=\CC v_{\phi'}$ onto $V(\phi'')_\phi=\CC v_{\phi''}$. Comparing the actions of $\F{g}^\phi$ on these two lines gives $\phi'=\phi''$.
\end{proof}

\begin{thm}\label{T3.7}
Assume that $\F{g}^\phi_{\o{0}}=\F{p}_{\o{0}}\oplus\CC y_0$ and $\F{g}_{\o{1}}^\phi=\F{p}_{\o{1}}$. For each $\xi\in\CC$, the $\F{g}$-supermodule
\beq
M_\xi(\phi):=W(\phi)/\C{U}(\F{g})(y_0-\xi1)w_\phi
\eeq
is a bland quasi-Whittaker supermodule of type $\phi$. Moreover, every irreducible quasi-Whittaker $\F{g}$-supermodule of type $\phi$ is isomorphic, up to parity shift, to $M_\xi(\phi)$ for a unique $\xi\in\CC$.
 \end{thm}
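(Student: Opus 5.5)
The plan is to reduce everything to Theorem \ref{T3.6} by identifying $M_\xi(\phi)$ with an induced module $V(\phi'_\xi)$.

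\textbf{Step 1: $\phi$ is extendable, with one extension for each $\xi$.} Since $\F{g}^\phi=\F{p}\oplus\CC y_0$ with $y_0$ even, define $\phi'_\xi\colon\F{g}^\phi\to\CC$ by $\phi'_\xi|_{\F{p}}=\phi$ and $\phi'_\xi(y_0)=\xi$. To see that $\phi'_\xi$ is a Lie superalgebra homomorphism, we check that it kills $[\F{g}^\phi,\F{g}^\phi]$:
\begin{itemize}
\item $[\F{p},\F{p}]$ is killed because $\phi$ is a homomorphism;
\item $[y_0,\F{p}]\subseteq\F{p}$ is killed by the definition of $\F{g}^\phi$;
\item $[y_0,y_0]=0$ because $y_0$ is even.
\end{itemize}
It also vanishes on the odd part, since $\F{g}^\phi_{\o{1}}=\F{p}_{\o{1}}$. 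Conversely, every extension of $\phi$ is $\phi'_\xi$ with $\xi=\phi'(y_0)$, so $\xi\mapsto\phi'_\xi$ is a bijection from $\CC$ onto the set of extensions.

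\textbf{Step 2: $M_\xi(\phi)\cong V(\phi'_\xi)$.} We have the surjection $\Phi\colon W(\phi)\to V(\phi'_\xi)$ with $w_\phi\mapsto v_{\phi'_\xi}$, and it kills $(y_0-\xi)w_\phi$. So it remains to show $\ker\Phi=\C{U}(\F{g})(y_0-\xi)w_\phi$.
\begin{itemize}
\item By transitivity of induction, $W(\phi)=\C{U}(\F{g})\otimes_{\C{U}(\F{g}^\phi)}\bigl(\C{U}(\F{g}^\phi)w_\phi\bigr)$.
\item By \eqref{F3.1} and Lemma \ref{L2.3}, $\C{U}(\F{g}^\phi)w_\phi=\CC[y_0]w_\phi$, with $\F{p}$ acting by $\phi$.
\item $(y_0-\xi)\CC[y_0]w_\phi$ is a $\F{g}^\phi$-submodule, and the quotient is $\CC v_{\phi'_\xi}$.
\item $\C{U}(\F{g})$ is a free right $\C{U}(\F{g}^\phi)$-module by PBW, so induction is exact. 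Hence $\ker\Phi=\C{U}(\F{g})\otimes(y_0-\xi)\CC[y_0]w_\phi=\C{U}(\F{g})(y_0-\xi)w_\phi$.
\end{itemize}
It follows from Lemma \ref{L5} and Theorem \ref{T3.6} that $M_\xi(\phi)$ is irreducible and bland.

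\textbf{Step 3: every irreducible $V$ of type $\phi$ is some $M_\xi(\phi)$.} This is the main step. Up to parity shift, let $V$ be generated by an even quasi-Whittaker vector $v$, and let $K=\ker(W(\phi)\to V)$.
\begin{itemize}
\item $K\neq0$. Otherwise $V\cong W(\phi)$ would be irreducible, but Lemma \ref{L3.3} says this forces $\F{g}^\phi_{\o{0}}=\F{p}_{\o{0}}$, which fails here.
\item $K$ is $\F{p}$-invariant, so by Proposition \ref{P3.2} it contains a nonzero element of $W(\phi)_\phi$.
\item By Theorem \ref{T3.1}, $W(\phi)_\phi=\CC[y_0]w_\phi$, because $y_0$ is the only basis element $y_j$ and it is even. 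So there is a nonzero polynomial $f$ with $f(y_0)w_\phi\in K$, i.e.\ $f(y_0)v=0$.
\item Hence $\CC[y_0]v$ is finite-dimensional, consists of even vectors, and lies in $V_\phi$ by Lemma \ref{L2.3}. So $y_0$ has an eigenvector $u\in\CC[y_0]v$ with $y_0u=\xi u$, and $u$ is an even quasi-Whittaker vector.
\item By irreducibility $u$ generates $V$. The universal property gives a surjection $W(\phi)\to V$ sending $w_\phi\mapsto u$, which kills $(y_0-\xi)w_\phi$ and so factors through $M_\xi(\phi)$.
\item Since $M_\xi(\phi)$ is irreducible by Step 2, this surjection is an isomorphism.
\end{itemize}

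\textbf{Step 4: $\xi$ is unique.} If $M_\xi(\phi)\cong M_{\xi'}(\phi)$, possibly after parity shift, then $V(\phi'_\xi)\cong V(\phi'_{\xi'})$ by Step 2. The bijection in Theorem \ref{T3.6} then gives $\phi'_\xi=\phi'_{\xi'}$, so $\xi=\xi'$.
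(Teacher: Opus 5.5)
Your proposal is correct and follows the paper's proof. The one variation is in Step 2: you identify $M_\xi(\phi)$ with $V(\phi'_\xi)$ by computing the kernel of $W(\phi)\to V(\phi'_\xi)$ through exactness of induction. The paper instead uses the surjection $V(\phi')\to M_\xi(\phi)$ together with the irreducibility of $V(\phi')$ from Lemma \ref{L5}. Your version has the small advantage of making $M_\xi(\phi)\neq 0$ explicit, which the paper uses without stating. Steps 3 and 4 match the paper's argument: citing Lemma \ref{L3.3} and the bijection of Theorem \ref{T3.6} is only a repackaging of its appeal to Theorem \ref{T3.4} and its direct comparison of the $y_0$-eigenvalues.
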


\begin{proof}
Let $\xi\in\CC$. Define $\phi'\colon\F{g}^\phi\rightarrow\CC$ by extending $\phi$ and setting $\phi'(y_0)=\xi$. By the definition of $\F{g}^\phi$, this is a Lie superalgebra homomorphism. The construction of $V(\phi')$ yields a surjective $\F{g}$-supermodule homomorphism $\Psi\colon V(\phi')\rightarrow M_\xi(\phi)$ such that $\Psi(v_{\phi'})=\o{w_\phi}$. Since $V(\phi')$ is irreducible by Lemma \ref{L5}, $\Psi$ is an isomorphism. In particular, $M_\xi(\phi)$ is bland.

Let $V$ be an irreducible quasi-Whittaker supermodule of type $\phi$, generated by a homogeneous cyclic quasi-Whittaker vector $v$. Up to parity shift, assume that $v$ is even. Let $\Phi\colon W(\phi)\rightarrow V$ be the surjective $\F{g}$-supermodule homomorphism sending $w_\phi$ to $v$. By Theorem \ref{T3.4}, $W(\phi)$ is reducible, so $\Phi$ cannot be an isomorphism and $\r{Ker}~\Phi\neq0$.

By Proposition \ref{P3.2} and Theorem \ref{T3.1}, there exists a polynomial $f$ such that $f(y_0)w_\phi\in\r{Ker}~\Phi$. Since $w_\phi\notin\r{Ker}~\Phi$, the polynomial $f$ is nonconstant. Thus
\beqn
f(y_0)v=f(y_0)\Phi(w_\phi)=\Phi(f(y_0)w_\phi)=0.
\eeqn
It follows that $\CC[y_0]v$ is finite-dimensional. Since this space is $y_0$-invariant, it contains an eigenvector $v'$, say $y_0v'=\xi v'$ for some $\xi\in\CC$. By Lemma \ref{L2.3}, $v'$ is a quasi-Whittaker vector of type $\phi$, and irreducibility of $V$ implies that $v'$ is cyclic. Hence, the surjective homomorphism $W(\phi)\rightarrow V$ sending $w_\phi$ to $v'$ factors through a surjective homomorphism $M_\xi(\phi)\rightarrow V$. Since $M_\xi(\phi)$ is irreducible, this homomorphism is an isomorphism.

For uniqueness, suppose that $M_{\xi_1}(\phi)\cong M_{\xi_2}(\phi)$. Any such isomorphism maps the one-dimensional space $M_{\xi_1}(\phi)_\phi$ onto $M_{\xi_2}(\phi)_\phi$. Since $y_0$ acts on these spaces by the scalars $\xi_1$ and $\xi_2$, respectively, we obtain $\xi_1=\xi_2$.
\end{proof}

\subsection{A special case}

We next consider the special case in which
\beqn
\F{g}^\phi_{\o{0}}=\F{p}_{\o{0}}\oplus\CC y_0,\quad \F{g}^\phi_{\o{1}}=\F{p}_{\o{1}}\oplus\CC y_1,
\eeqn
and
\beqn
[y_1,y_1]=\la y_0+p_0,\quad [y_0,y_1]=\mu y_1+p_1,
\eeqn
where $p_0\in \F{p}_{\o{0}},~p_1\in\F{p}_{\o{1}}$ and $\la,\mu\in\CC$. 
We classify all irreducible quasi-Whittaker supermodules of type $\phi$ in this setting and apply the result in Section 4.

\begin{lem}\label{L3.8}
Under the assumptions above, let $V$ be an irreducible quasi-Whittaker $\F{g}$-supermodule generated by an even cyclic quasi-Whittaker vector $v$. Then there exists a nonconstant polynomial $f\in\CC[t]$ such that $f(y_0)v=0$.
\end{lem}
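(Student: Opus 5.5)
We follow the strategy of the proof of Theorem \ref{T3.7}: use the kernel of the canonical map from $W(\phi)$ and Proposition \ref{P3.2} to produce a nonzero quasi-Whittaker vector in that kernel, and then convert it into a polynomial relation in $y_0$ alone.

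\emph{Step 1: the canonical map has nonzero kernel.} Let $\Phi\colon W(\phi)\rightarrow V$ be the surjective homomorphism with $\Phi(w_\phi)=v$. By Theorem \ref{T3.1},
\beqn
W(\phi)_\phi=\widetilde{\CC}[y]w_\phi=\CC[y_0]w_\phi\oplus\CC[y_0]y_1w_\phi .
\eeqn
Here $J$ consists of two indices, and we order them so that $y^\b$ is $y_0^ay_1^{\epsilon}$ with $\epsilon\in\{0,1\}$. Since $\F{g}^\phi_{\o{0}}\neq\F{p}_{\o{0}}$, Lemma \ref{L3.3} shows that $W(\phi)$ is reducible. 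Hence $\r{Ker}\,\Phi$ is a nonzero subsupermodule.

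\emph{Step 2: a homogeneous quasi-Whittaker vector in the kernel.} By Proposition \ref{P3.2}, $\r{Ker}\,\Phi$ contains a nonzero vector of $W(\phi)_\phi$. Since both $\r{Ker}\,\Phi$ and $W(\phi)_\phi$ are $\ZZ_2$-graded (Lemma \ref{L2.3}), we may take this vector homogeneous. It therefore has one of two forms. If it is even, it is $g(y_0)w_\phi$ with $g\neq0$. If it is odd, it is $h(y_0)y_1w_\phi$ with $h\neq0$.

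\emph{Step 3: the even case.} Suppose the vector is $g(y_0)w_\phi$. Because $w_\phi\notin\r{Ker}\,\Phi$ (its image $v$ is nonzero), $g$ is nonconstant. Then $g(y_0)v=\Phi(g(y_0)w_\phi)=0$, and we take $f=g$.

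\emph{Step 4: the odd case, which is the main obstacle.} Suppose $h(y_0)y_1v=0$ with $h\neq0$. We apply $y_1$ on the left and move it past $h(y_0)$ using $[y_0,y_1]=\mu y_1+p_1$.

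First we control the error term $p_1$. For $u\in V_\phi$, we have $p_1u=\phi(p_1)u=0$. Since $y_0$ and $y_1$ preserve $V_\phi$ (Lemma \ref{L2.3}), we get $p_1\,y_0^a y_1 v=0$. So $p_1$ kills every vector that arises in the computation.

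Next we commute $y_1$ past $y_0$. The relation gives $y_0y_1=y_1(y_0+\mu)+p_1$. Moving $y_1$ to the left yields $y_1h(y_0)=h(y_0-\mu)y_1$ modulo terms of the form $(\cdots)p_1$. Acting on $y_1v$, these error terms vanish because $p_1$ is applied to vectors in $V_\phi$.

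Then we simplify $y_1^2v$. We have $y_1^2=\tfrac12[y_1,y_1]=\tfrac12(\la y_0+p_0)$, so $y_1^2v=\tfrac12(\la y_0+\phi(p_0))v$.

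Putting these together gives
\beqn
0=y_1h(y_0)y_1v=\tfrac12 h(y_0-\mu)\bigl(\la y_0+\phi(p_0)\bigr)v .
\eeqn
If $\la\neq0$, the polynomial $f(t)=h(t-\mu)(\la t+\phi(p_0))$ is nonconstant, and we are done.

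If $\la=0$, two subcases remain.
\begin{itemize}
\item[(a)] If $\phi(p_0)\neq0$, the identity gives $h(y_0-\mu)v=0$. This finishes the argument unless $h$ is constant.
\item[(b)] If $h$ is constant, then $y_1v=0$. Also, if $\phi(p_0)=0$ (with $\la=0$), then $y_1^2v=0$. In both situations the kernel contains $y_1w_\phi$ or $y_1^2w_\phi$. The degenerate situation $y_1v=0$ must then be handled separately.
\end{itemize}

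To handle the degenerate situation, we pass from $w_\phi$ to $y_1w_\phi$. We consider the subsupermodule $\C{U}(\F{g})y_1w_\phi$, or the quotient by it, and argue that $V$ is a quotient of $\C{U}(\F{g})\otimes_{\C{U}(\F{g}^\phi)}\CC[y_0]v$. We then apply Proposition \ref{P3.2} again to a lift of a nonzero even quasi-Whittaker vector in the kernel. We expect that this reduces to the even case of Step 3 and produces a nonconstant $g$ with $g(y_0)v=0$.

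Making this reduction rigorous is the delicate point of the proof. The sign bookkeeping and the vanishing of the $p_1$ contributions are routine.
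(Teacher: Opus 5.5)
Your Steps 1--3 and the cases $\la\neq0$ or $\phi(p_0)\neq0$ are correct. They are essentially the paper's argument; the paper writes the odd vector as $y_1g(y_0)w_\phi$ and multiplies by $y_1$ directly, so no commutation is needed. In (a), a constant $h$ is impossible, since it would give $\tfrac12 h\,\phi(p_0)v=0$, i.e.\ $v=0$.

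The genuine gap is the remaining case $\la=\phi(p_0)=0$, which is the only hard case of the lemma. There $y_1^2=\tfrac12 p_0$ acts by $0$ on $V_\phi$, so applying $y_1$ again gives nothing. Indeed $y_1^2w_\phi=0$ already in $W(\phi)$, so ``the kernel contains $y_1^2w_\phi$'' carries no information. You do not treat this case when $h$ is nonconstant, where $y_1v=0$ is not available a priori. For $y_1v=0$ you only state what you ``expect''.

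Two ideas are needed, and the paper supplies both.

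First, replace $v$ by $v':=h(y_0+\mu)v$. Your commutation rule gives $h(y_0)y_1u=y_1h(y_0+\mu)u$ for $u\in V_\phi$, so $y_1v'=h(y_0)y_1v=0$. If $v'=0$ you are done, since $h(t+\mu)$ is then nonconstant because $v\neq0$. Otherwise $v'$ is a nonzero even, hence cyclic, quasi-Whittaker vector. The assignment $w_\phi\mapsto v'$ then induces a surjection $\o{\Phi'}\colon\o{W}:=W(\phi)/\C{U}(\F{g})y_1w_\phi\rightarrow V$.

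Second, you must prove $\r{Ker}\,\o{\Phi'}\neq0$; your sketch simply presupposes it. Since $y_1\o{y_0w_\phi}=-\o{(\mu y_1+p_1)w_\phi}=0$, the subsupermodule $\C{U}(\F{g})\o{y_0w_\phi}=\r{Span}_\CC\{\o{x^\a y_0^kw_\phi}\mid \a\in\widetilde{\ZZ}_+^I,\ k\geq1\}$ is nonzero and proper. Hence $\o{W}$ is reducible and $\o{\Phi'}$ is not injective. Now rerun the degree-reduction argument of Theorem \ref{T3.1} and Proposition \ref{P3.2} on $\o{W}$, whose quasi-Whittaker vectors are $\CC[y_0]\o{w_\phi}$; Proposition \ref{P3.2} itself concerns only $W(\phi)$. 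This gives a nonconstant $g_1$ with $g_1(y_0)v'=0$, and $f(t)=g_1(t)h(t+\mu)$ works.

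Your alternative object $\C{U}(\F{g})\otimes_{\C{U}(\F{g}^\phi)}\CC[y_0]v$ makes sense only when $\CC[y_0]v$ is $y_1$-stable. Moreover, the kernel of its map to $V$ contains no nonzero quasi-Whittaker vector: these are $1\otimes\CC[y_0]v$, which map injectively. So the ``lift'' you propose does not exist there. That route could work only as a contradiction argument showing $\dim\CC[y_0]v<\infty$, which you have not set up.
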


\begin{proof}
Let $\Phi\colon W(\phi)\rightarrow V$ be the surjective $\F{g}$-supermodule homomorphism such that $\Phi(w_\phi)=v$. Since $\F{g}^\phi_{\o{0}}\neq\F{p}_{\o{0}}$, Theorem \ref{T3.4} implies that $W(\phi)$ is reducible. Thus $\Phi$ cannot be an isomorphism, and $\r{Ker}~\Phi$ is a nonzero $\F{p}$-invariant subspace of $W(\phi)$.

By Proposition \ref{P3.2} and Theorem \ref{T3.1}, $\r{Ker}~\Phi$ contains a nonzero quasi-Whittaker vector in
\beqn
\CC[y_0]w_\phi\oplus y_1\CC[y_0]w_\phi.
\eeqn
Since $\r{Ker}~\Phi$ is graded, one of the homogeneous components of this vector is nonzero. Thus, for some nonzero $g\in\CC[t]$, either
\beqn
g(y_0)w_\phi\in\r{Ker}~\Phi
\qquad\text{or}\qquad
y_1g(y_0)w_\phi\in\r{Ker}~\Phi.
\eeqn

If the first alternative holds, then $g$ is nonconstant; otherwise, $w_\phi\in\r{Ker}~\Phi$, contradicting the surjectivity of $\Phi$. Taking $f=g$ proves the assertion in this case. We may therefore assume that no nonzero vector of the form $g(y_0)w_\phi$ lies in $\r{Ker}~\Phi$ and that $y_1g(y_0)w_\phi\in\r{Ker}~\Phi$. Then
\beqn
y_1^2g(y_0)w_\phi=\frac{1}{2}[y_1,y_1]g(y_0)w_\phi
=\frac{1}{2}\(\la y_0+\phi(p_0)\)g(y_0)w_\phi\in\r{Ker}~\Phi.
\eeqn
If $\la\neq0$ or $\phi(p_0)\neq0$, set
\beqn
f(t)=\frac{1}{2}\(\la t+\phi(p_0)\)g(t).
\eeqn
Then $f(y_0)w_\phi\in\r{Ker}~\Phi$ and $f\neq0$. Since $\r{Ker}~\Phi$ is proper, $f$ is nonconstant. Hence $f(y_0)v=0$.

It remains to consider the case $\la=\phi(p_0)=0$. Set $v'=g(y_0)v$. Since $g(y_0)w_\phi\notin\r{Ker}~\Phi$, we have $v'\neq0$. Moreover, $v'$ is an even quasi-Whittaker vector of type $\phi$ and is cyclic because $V$ is irreducible. Hence, there is a surjective $\F{g}$-supermodule homomorphism $\Phi'\colon W(\phi)\rightarrow V$ sending $w_\phi$ to $v'$. Since
\beqn
y_1v'=\Phi\bigl(y_1g(y_0)w_\phi\bigr)=0,
\eeqn
the map $\Phi'$ induces a surjective $\F{g}$-supermodule homomorphism
\beqn
\o{\Phi'}\colon W(\phi)/\C{U}(\F{g})y_1w_\phi\rightarrow V.
\eeqn
By the PBW theorem, the quotient has basis
\beqn
\o{x^\a y_0^k w_\phi},\quad \a\in\widetilde{\ZZ}_+^I,~k\in\ZZ_+.
\eeqn
The degree-reduction arguments used in Theorem \ref{T3.1} and Proposition \ref{P3.2} show that
\beqn
\(W(\phi)/\C{U}(\F{g})y_1w_\phi\)_\phi
=\r{Span}_\CC\{\o{y_0^kw_\phi}\mid k\in\ZZ_+\},
\eeqn
and that every nonzero $\F{p}$-invariant subspace of this quotient contains a nonzero quasi-Whittaker vector of type $\phi$.

Furthermore,
\beqn
y_1\o{y_0w_\phi}=\o{[y_1,y_0]w_\phi}+\o{y_0y_1w_\phi}=0.
\eeqn
Together with the PBW theorem, this shows that $\C{U}(\F{g})\o{y_0w_\phi}$ is a proper subsupermodule of $W(\phi)/\C{U}(\F{g})y_1w_\phi$. Hence $\o{\Phi'}$ is not an isomorphism, so $\r{Ker}~\o{\Phi'}$ is a nonzero $\F{p}$-invariant subspace of the quotient. It follows that there exists a nonzero polynomial $g_1\in\CC[t]$ such that $g_1(y_0)\o{w_\phi}\in\r{Ker}~\o{\Phi'}$. This polynomial is nonconstant because $\o{\Phi'}$ is surjective. Therefore,
\beqn
g_1(y_0)g(y_0)v=g_1(y_0)v'=0.
\eeqn
Taking $f=g_1g$ completes the proof.
\end{proof}

For $\xi\in\CC$, set 
\beq
\widetilde{L}_\xi(\phi)=W(\phi)/\C{U}(\F{g})(y_0-\xi1)w_\phi.
\eeq
By the PBW theorem, $\widetilde{L}_\xi(\phi)$ has a basis 
\beqn
\o{x^\a w_\phi},\quad \o{x^\a y_1w_\phi},\quad \a\in\widetilde{\ZZ}_+^I.
\eeqn
Moreover, we have
\begin{align}
&y_0\o{y_1w_\phi}=(\xi+\mu)\o{y_1w_\phi},\label{F3.5}\\ 
&y_1\o{y_1w_\phi}=\frac{1}{2}\(\la\xi+\phi(p_0)\)\o{w_\phi}.\label{F3.6}
\end{align}
The degree-reduction arguments used in Theorem \ref{T3.1} and Proposition \ref{P3.2} show that
\beq
\widetilde{L}_\xi(\phi)_\phi=\CC \o{w_\phi}\oplus\CC \o{y_1w_\phi},
\eeq
and that each nonzero $\F{p}$-invariant subspace of $\widetilde{L}_\xi(\phi)$ contains a nonzero quasi-Whittaker vector of type $\phi$.

\begin{lem}\label{L3.9}
(1) If $\la\xi+\phi(p_0)\neq 0$, then $\widetilde{L}_\xi(\phi)$ is irreducible.

(2) If $\la\xi+\phi(p_0)=0$, then $\widetilde{L}_\xi(\phi)$ has a unique nonzero proper subsupermodule $\C{U}(\F{g})\o{y_1w_\phi}$.
\end{lem}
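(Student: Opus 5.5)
Both parts rest on the fact stated just before the lemma: every nonzero $\F{p}$-invariant subspace of $\widetilde{L}_\xi(\phi)$ contains a nonzero quasi-Whittaker vector of type $\phi$, and
\beqn
\widetilde{L}_\xi(\phi)_\phi=\CC\o{w_\phi}\oplus\CC\o{y_1w_\phi}.
\eeqn
Let $M$ be a nonzero subsupermodule of $\widetilde{L}_\xi(\phi)$. Then $M\cap\widetilde{L}_\xi(\phi)_\phi\neq0$. Since $M$ is $\ZZ_2$-graded and $\o{w_\phi}$, $\o{y_1w_\phi}$ have opposite parities, taking homogeneous components shows that $M$ contains $\o{w_\phi}$ or $\o{y_1w_\phi}$.

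For (1), suppose $\la\xi+\phi(p_0)\neq0$. If $\o{y_1w_\phi}\in M$, then \eqref{F3.6} gives
\beqn
\o{w_\phi}=\frac{2}{\la\xi+\phi(p_0)}\,y_1\o{y_1w_\phi}\in M.
\eeqn
So in either case $\o{w_\phi}\in M$. Since $\o{w_\phi}$ generates $\widetilde{L}_\xi(\phi)$, we get $M=\widetilde{L}_\xi(\phi)$, and $\widetilde{L}_\xi(\phi)$ is irreducible.

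For (2), suppose $\la\xi+\phi(p_0)=0$, and set $N=\C{U}(\F{g})\o{y_1w_\phi}$. By \eqref{F3.6}, $y_1$ kills $\o{y_1w_\phi}$. By \eqref{F3.5}, $y_0$ acts on it by the scalar $\xi+\mu$. Elements of $\F{p}$ act on it by $\phi$. Hence $\CC\o{y_1w_\phi}$ is a one-dimensional $\F{g}^\phi$-subsupermodule, and $\F{g}^\phi$ acts on it through a homomorphism $\phi''$ extending $\phi$, with $\phi''(y_0)=\xi+\mu$ and $\phi''(y_1)=0$. The PBW theorem (with $\C{U}(\F{g})$ free over $\C{U}(\F{g}^\phi)$ on $x^\a$) then gives
\beqn
N=\r{Span}_\CC\{\o{x^\a y_1w_\phi}\mid\a\in\widetilde{\ZZ}_+^I\}.
\eeqn
By the basis of $\widetilde{L}_\xi(\phi)$ recorded above, $\o{w_\phi}\notin N$. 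So $N$ is a nonzero proper subsupermodule.

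For uniqueness, let $M$ be any nonzero proper subsupermodule. Then $\o{w_\phi}\notin M$, so $\o{y_1w_\phi}\in M$ and $N\subseteq M$. It remains to show $M\subseteq N$. The quotient $\widetilde{L}_\xi(\phi)/N$ has basis $\o{x^\a w_\phi}$. I expect to identify it with $V(\phi')$, where $\phi'$ extends $\phi$ by $\phi'(y_0)=\xi$ and $\phi'(y_1)=0$. This requires checking that $\phi'$ is a homomorphism, and that is the main obstacle. We need $\phi'([y_1,y_1])=\la\xi+\phi(p_0)=0$, which is exactly the hypothesis. We also need $\phi'([y_0,y_1])=\mu\phi'(y_1)+\phi(p_1)=0$, which holds because $\phi$ vanishes on odd elements. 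On $\F{p}$ the brackets are handled by the definition of $\F{g}^\phi$. With $\phi'$ a homomorphism, the universal property gives a surjection $V(\phi')\to\widetilde{L}_\xi(\phi)/N$. Comparing PBW bases, this surjection is an isomorphism. By Lemma \ref{L5}, $\widetilde{L}_\xi(\phi)/N$ is therefore irreducible. Since $M/N$ is a proper subsupermodule of it ($\o{w_\phi}\notin M$), $M/N=0$, so $M=N$. This proves uniqueness.
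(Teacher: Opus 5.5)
Your proof is correct. Part (1) matches the paper's argument exactly, and so does the skeleton of part (2): identify $N=\C{U}(\F{g})\o{y_1w_\phi}$ as the span of the $\o{x^\a y_1w_\phi}$, show that $\widetilde{L}_\xi(\phi)/N$ is irreducible, then note that any nonzero proper $M$ must contain $\o{y_1w_\phi}$ and hence equal $N$.

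The genuine difference is how you prove that the quotient is irreducible. The paper only asserts that the degree-reduction arguments of Theorem~\ref{T3.1} and Proposition~\ref{P3.2} carry over to this quotient. You instead observe that the hypothesis $\la\xi+\phi(p_0)=0$ is exactly what makes $\phi'$, defined by $\phi'(y_0)=\xi$ and $\phi'(y_1)=0$, a Lie superalgebra homomorphism on $\F{g}^\phi$. The bracket $[y_0,y_1]$ causes no trouble because $\phi(p_1)=0$ by parity. You then identify the quotient with $V(\phi')$ and invoke Lemma~\ref{L5}.

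Your route reduces the key step to an already proved lemma rather than an unspelled ``similar argument''. It also shows why the hypothesis is needed. As a byproduct, it proves that $L_\xi(\phi)\cong V(\phi')$ is bland in this case, which the paper states after the lemma without proof. The paper's version is shorter to write but leaves the adaptation of the degree-reduction argument to the reader.

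A minor simplification: once you have the surjection $V(\phi')\to\widetilde{L}_\xi(\phi)/N$, irreducibility of $V(\phi')$ already makes it an isomorphism onto the nonzero quotient, so the PBW basis comparison is optional.
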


\begin{proof}
(1) Let $M$ be a nonzero subsupermodule of $\widetilde{L}_\xi(\phi)$. Then $M_\phi\neq0$. Since $M$ is $\ZZ_2$-graded, either $\o{w_\phi}\in M$ or $\o{y_1w_\phi}\in M$. In the latter case, \eqref{F3.6} gives
\beqn
\o{w_\phi}=\frac{2}{\la\xi+\phi(p_0)}y_1\o{y_1w_\phi}\in M.
\eeqn	
Thus $\o{w_\phi}\in M$ in either case, so $M=\widetilde{L}_\xi(\phi)$. Therefore, $\widetilde{L}_\xi(\phi)$ is irreducible.

(2) By the PBW theorem and \eqref{F3.5}--\eqref{F3.6},
\beqn
\C{U}(\F{g})\o{y_1w_\phi}=\r{Span}_\CC\{\o{x^\a y_1w_\phi}\mid \a\in\widetilde{\ZZ}_+^I\}.
\eeqn
Thus $\C{U}(\F{g})\o{y_1w_\phi}$ is proper, and $\widetilde{L}_\xi(\phi)/\C{U}(\F{g})\o{y_1w_\phi}$ has basis
\beqn
\o{x^\a w_\phi}+\C{U}(\F{g})\o{y_1w_\phi},\quad \a\in\widetilde{\ZZ}_+^I.
\eeqn
The degree-reduction arguments used in Theorem \ref{T3.1} and Proposition \ref{P3.2} show that this quotient is irreducible. Hence, $\C{U}(\F{g})\o{y_1w_\phi}$ is a maximal proper subsupermodule of $\widetilde{L}_\xi(\phi)$.

Let $M$ be any nonzero proper subsupermodule of $\widetilde{L}_\xi(\phi)$. It contains a nonzero quasi-Whittaker vector of type $\phi$. Since $M$ is proper and $\ZZ_2$-graded, we must have $\o{y_1w_\phi}\in M$. Hence $\C{U}(\F{g})\o{y_1w_\phi}\subseteq M$, and maximality forces equality. Therefore, $\C{U}(\F{g})\o{y_1w_\phi}$ is the unique nonzero proper subsupermodule of $\widetilde{L}_\xi(\phi)$.
\end{proof}

For $\xi\in\CC$, set 
\beq
L_\xi(\phi)=\begin{cases}
	\widetilde{L}_\xi(\phi),\quad &\text{if~}\la\xi+\phi(p_0)\neq0;\\
	\widetilde{L}_\xi(\phi)/\C{U}(\F{g})\o{y_1w_\phi},\quad & \text{if~}\la\xi+\phi(p_0)=0.
\end{cases}
\eeq
Then $L_\xi(\phi)$ is an irreducible quasi-Whittaker supermodule of type $\phi$. Moreover, it is bland if and only if $\la\xi+\phi(p_0)=0$. We also have
\beq\label{F3}
L_\xi(\phi)\cong W(\phi)\big/\(\C{U}(\F{g})(y_0-\xi1)w_\phi+\delta_{0,\la\xi+\phi(p_0)}\C{U}(\F{g})y_1w_\phi\).
\eeq

\begin{thm}\label{T3.10}
Assume that
\beqn
\F{g}^\phi_{\o{0}}=\F{p}_{\o{0}}\oplus\CC y_0,\quad \F{g}^\phi_{\o{1}}=\F{p}_{\o{1}}\oplus\CC y_1.
\eeqn
and
\beqn
[y_1,y_1]=\la y_0+p_0,\quad [y_0,y_1]=\mu y_1+p_1,
\eeqn
where $p_0\in\F{p}_{\o{0}}$ and $p_1\in\F{p}_{\o{1}}$. Then every irreducible quasi-Whittaker $\F{g}$-supermodule of type $\phi$ is isomorphic, up to parity shift, to $L_\xi(\phi)$ for a unique $\xi\in\CC$.
\end{thm}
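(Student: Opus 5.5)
Let $V$ be an irreducible quasi-Whittaker $\F{g}$-supermodule of type $\phi$. After a parity shift we may assume it is generated by an even cyclic quasi-Whittaker vector $v$. The plan follows the proof of Theorem \ref{T3.7}: find an even quasi-Whittaker vector on which $y_0$ acts by a scalar $\xi$, obtain a surjection from $\widetilde{L}_\xi(\phi)$ onto $V$, and then use Lemma \ref{L3.9} to identify $V$ with $L_\xi(\phi)$.

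First, Lemma \ref{L3.8} gives a nonconstant polynomial $f$ with $f(y_0)v=0$. Hence $\CC[y_0]v$ is finite-dimensional and $y_0$-stable. It consists of even vectors, and by Lemma \ref{L2.3} these vectors lie in $V_\phi$, since $y_0\in\F{g}^\phi$. So it contains an eigenvector $v'\neq0$ with $y_0v'=\xi v'$, and $v'$ is an even quasi-Whittaker vector of type $\phi$. Because $V$ is irreducible, $v'$ is cyclic. The surjection $W(\phi)\to V$ sending $w_\phi\mapsto v'$ therefore kills $(y_0-\xi1)w_\phi$, so it factors through a surjection $\Psi\colon\widetilde{L}_\xi(\phi)\to V$.

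Next, $\r{Ker}\,\Psi$ is a proper subsupermodule of $\widetilde{L}_\xi(\phi)$. If $\la\xi+\phi(p_0)\neq0$, Lemma \ref{L3.9}(1) says $\widetilde{L}_\xi(\phi)$ is irreducible, so $\Psi$ is an isomorphism and $V\cong L_\xi(\phi)$. If $\la\xi+\phi(p_0)=0$, Lemma \ref{L3.9}(2) says the kernel is either $0$ or $\C{U}(\F{g})\o{y_1w_\phi}$. It cannot be $0$, because $\widetilde{L}_\xi(\phi)$ is then reducible while $V$ is irreducible. So the kernel is $\C{U}(\F{g})\o{y_1w_\phi}$ and $V\cong L_\xi(\phi)$, consistent with \eqref{F3}.

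For uniqueness, suppose $L_{\xi_1}(\phi)\cong L_{\xi_2}(\phi)$, up to parity shift. An isomorphism maps $L_{\xi_1}(\phi)_\phi$ onto $L_{\xi_2}(\phi)_\phi$ and commutes with $y_0$, so these spaces have the same $y_0$-eigenvalues. By the degree-reduction description, $L_\xi(\phi)_\phi$ is $\CC\o{w_\phi}$, on which $y_0$ acts by $\xi$, in the bland case. In the non-bland case it is $\CC\o{w_\phi}\oplus\CC\o{y_1w_\phi}$, with $y_0$-eigenvalues $\xi$ on the even part and $\xi+\mu$ on the odd part by \eqref{F3.5}.

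\textbf{Main obstacle: parity in uniqueness.} Since we work up to parity shift, the even part of one side may correspond to the odd part of the other, which could a priori allow $\xi_2=\xi_1+\mu$. To rule this out, first compare blandness: $\dim L_\xi(\phi)_\phi$ is $1$ exactly when $\la\xi+\phi(p_0)=0$, so both modules are bland or both are not. In the bland case there is only one eigenvalue, and $\xi_1=\xi_2$ immediately.

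In the non-bland case we have $\la\xi_i+\phi(p_0)\neq0$ for $i=1,2$, and $y_1$ interchanges the two lines of $L_{\xi_i}(\phi)_\phi$. Apply \eqref{F3.6} and the bracket $[y_1,y_1]=\la y_0+p_0$ to each of the two lines. On the line where $y_0$ has eigenvalue $\eta$, the operator $y_1^2=\tfrac12(\la y_0+p_0)$ acts by $\tfrac12(\la\eta+\phi(p_0))$. I would try to use these invariants, together with which line is the image of $y_1$ applied to the other, to pin down which eigenvalue is the even one after any parity shift. If that does not separate the two lines, I would restrict to isomorphisms that preserve parity, matching the even lines $\CC\o{w_\phi}$ directly so that $\xi_1=\xi_2$. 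I would also check whether the intended statement means uniqueness up to this parity identification.
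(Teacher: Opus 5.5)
Your existence argument is the paper's proof step for step: Lemma \ref{L3.8}, an eigenvector of $y_0$ in $\mathbb{C}[y_0]v\subseteq V_\phi$, the induced surjection $\widetilde{L}_\xi(\phi)\to V$, then Lemma \ref{L3.9}. The paper's uniqueness argument is exactly your fallback: a supermodule isomorphism $L_{\xi_1}(\phi)\cong L_{\xi_2}(\phi)$ preserves parity, so it matches the even lines $\mathbb{C}\overline{w_\phi}$, on which $y_0$ acts by $\xi_1$ and $\xi_2$.

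The paper does not discuss the parity-reversing case you raise, but that obstacle is illusory. Your $y_1^2$-invariant will not resolve it. By \eqref{F3.6}, $y_1^2$ acts by the same scalar $c=\frac12(\lambda\xi+\phi(p_0))$ on both lines of $L_\xi(\phi)_\phi$. Comparing this with $\frac12(\lambda(\xi+\mu)+\phi(p_0))$ on the odd line only gives $\lambda\mu=0$.

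Use $[y_0,y_1]=\mu y_1+p_1$ instead. In the non-bland case $c\neq0$. Since $p_1$ acts on the quasi-Whittaker vector $\overline{y_1w_\phi}$ by $\phi(p_1)=0$, \eqref{F3.5} and \eqref{F3.6} give
\[
c\xi\,\overline{w_\phi}=y_0y_1\overline{y_1w_\phi}=\mu y_1\overline{y_1w_\phi}+y_1y_0\overline{y_1w_\phi}=c(\xi+2\mu)\,\overline{w_\phi},
\]
so $\mu=0$. Equivalently, the super Jacobi identity $[y_0,[y_1,y_1]]=2[[y_0,y_1],y_1]$ yields $\lambda\mu=0$ and $\mu\phi(p_0)=0$. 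Thus $\mu\neq0$ forces every $L_\xi(\phi)$ to be bland.

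Consequently, in the non-bland case $y_0$ acts by the single scalar $\xi$ on all of $L_\xi(\phi)_\phi$. In the bland case $L_\xi(\phi)_\phi$ is a single line. The dimension of the quasi-Whittaker space is unchanged by parity shift, so $L_{\xi_1}(\phi)$ and $L_{\xi_2}(\phi)$ are both bland or both non-bland. Hence any isomorphism of $L_{\xi_1}(\phi)$ with $L_{\xi_2}(\phi)$ or with its parity shift forces $\xi_1=\xi_2$. Uniqueness therefore holds even in the parity-blind sense, which the paper's argument does not explicitly cover.
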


\begin{proof}
Let $V$ be an irreducible quasi-Whittaker supermodule of type $\phi$, generated by an even cyclic quasi-Whittaker vector $v$. By Lemma \ref{L3.8}, there exists a nonconstant polynomial $f\in\CC[t]$ such that $f(y_0)v=0$. Hence $\dim\CC[y_0]v<\infty$. Since $\CC[y_0]v$ is $y_0$-invariant, it contains an eigenvector $v_0$; thus, for some $\xi\in\CC$,
\beqn
y_0v_0=\xi v_0.
\eeqn
Since $y_0\in\F{g}^\phi$, Lemma \ref{L2.3} implies that $\CC[y_0]v\subseteq V_\phi$. Therefore, $v_0$ is an even quasi-Whittaker vector of type $\phi$. Since $V$ is irreducible and $v_0\neq0$, the vector $v_0$ is cyclic. The resulting surjective homomorphism $\Phi_0\colon W(\phi)\rightarrow V$, defined by $\Phi_0(w_\phi)=v_0$, induces a surjective $\F{g}$-supermodule homomorphism
\beqn
\o{\Phi}_0\colon\widetilde{L}_\xi(\phi)\rightarrow V.
\eeqn

If $\la\xi+\phi(p_0)\neq0$, Lemma \ref{L3.9}(1) implies that $\o{\Phi}_0$ is an isomorphism.

If $\la\xi+\phi(p_0)=0$, then $\widetilde{L}_\xi(\phi)$ is reducible by Lemma \ref{L3.9}(2). Since $V$ is irreducible, $\r{Ker}~\o{\Phi}_0$ is a nonzero proper subsupermodule of $\widetilde{L}_\xi(\phi)$. Again by Lemma \ref{L3.9}(2),
\beqn
\r{Ker}~\o{\Phi}_0=\C{U}(\F{g})\o{y_1w_\phi}.
\eeqn
Therefore, $\o{\Phi}_0$ induces an isomorphism $L_\xi(\phi)\cong V$.

Finally, suppose that $L_{\xi_1}(\phi)\cong L_{\xi_2}(\phi)$. Such an isomorphism preserves the even part of the space of quasi-Whittaker vectors, which is the line spanned by the image of $w_\phi$. Since $y_0$ acts on this line by $\xi_i$ in $L_{\xi_i}(\phi)$, we obtain $\xi_1=\xi_2$.
\end{proof}

\begin{rem}
	If $[y_1,y_1]\notin \F{p}_{\o{0}}$, then one can choose $y_0=[y_1,y_1]$ up to a nonzero scalar multiple. In this situation, $\phi(p_0)=0$, and therefore $L_\xi(\phi)$ is given by
	\beq\label{F10}
	L_\xi(\phi)= W(\phi)\big/\(\C{U}(\F{g})(y_0-\xi1)w_\phi+\delta_{0,\xi}\C{U}(\F{g})y_1w_\phi\).
	\eeq
\end{rem}

\section{Applications}

In this section, we apply the general results of Section 3 to three Lie superalgebras arising in mathematical physics: the $N=1$ super Schrödinger algebra, the $N=1$ $\frac{3}{2}$-conformal Galilei superalgebra, and the complete spectrum-generating superalgebra. For each algebra, we choose a natural graded ideal $\F{p}$ and determine the corresponding Whittaker annihilator $\F{g}^\phi$. We then use these computations to determine whether the universal quasi-Whittaker supermodule $W(\phi)$ is irreducible and, in the reducible cases, to classify all irreducible quasi-Whittaker supermodules of type $\phi$, up to parity shift.

\subsection{The $N=1$ super Schrödinger algebra}

The Schr\"odinger algebra describes the spacetime symmetries of the free nonrelativistic Schr\"odinger equation, while its supersymmetric extension incorporates fermionic symmetries. Thus, studying the representations of $\C{S}$ provides an algebraic framework for understanding symmetry sectors in nonrelativistic supersymmetric models.

Following \cite{WCM}, let $\C{S}$ denote the 9-dimensional complex $N=1$ super Schrödinger algebra with
\beqn
\C{S}_{\bar{0}} = \r{Span}_{\mathbb{C}}\{e, f, h, p, q, z\}, \quad \C{S}_{\bar{1}} = \r{Span}_{\mathbb{C}}\{E, F, G\},
\eeqn  
whose nonzero brackets are
\begin{alignat*}{4}
&[h,e] = 2e,  &\qquad &[h,f] = -2f,  &\qquad &[h,p] = p,  &\qquad &[h,q] = -q, \\
&[e,f] = h,   &\qquad &[p,q] = z,    &\qquad &[e,q] = p,  &\qquad &[p,f] = -q, \\
&[h,E] = E,   &\qquad &[h,F] = -F,   &\qquad &[e,F] = -E, &\qquad &[f,E] = -F, \\
&[p,F] = G,   &\qquad &[q,E] = G,    &\qquad &[E,E] = 2e, &\qquad &[F,F] = -2f, \\
&[G,G] = z,   &\qquad &[E,F] = h,    &\qquad &[E,G] = -p, &\qquad &[F,G] = q.
\end{alignat*}
Take 
\beqn
\F{p} = \r{Span}_{\mathbb{C}}\{p, q, z, G\}.
\eeqn
Then $\F{p}$ is an ideal of $\C{S}$. 

Let $\phi\colon \F{p}\rightarrow \CC$ 
be a Lie superalgebra homomorphism. Then
$\phi(z)=\phi(G)=0$, and $\phi$ is determined by $\phi(p)$ and $\phi(q)$.

\begin{prop}
Assume that $\phi\neq 0$. Then
\beqn
\C{S}^{\phi}_{\overline 0}=\F{p}_{\overline 0}\oplus \mathbb{C}y_0,\quad \C{S}^{\phi}_{\overline 1}=\F{p}_{\overline 1}\oplus \mathbb{C}y_1
\eeqn
where
\beqn
y_0=\phi(q)^2e+\phi(p)\phi(q)h-\phi(p)^2f,\quad y_1=\phi(q)E+\phi(p)F
\eeqn
and $[y_1,y_1]=2y_0$.
\end{prop}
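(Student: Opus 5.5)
Since $\C{S}^\phi$ is $\ZZ_2$-graded and contains $\F{p}$, it suffices to compute it modulo $\F{p}$ on each homogeneous component. Write $a=\phi(p)$, $b=\phi(q)$; then $(a,b)\neq(0,0)$. A general even element modulo $\F{p}_{\o{0}}$ is $y=\alpha e+\beta h+\gamma f$, and a general odd element modulo $\F{p}_{\o{1}}$ is $Y=sE+tF$. The condition $y\in\C{S}^\phi$ means $\phi([y,\F{p}])=0$. By the ideal property, $[y,z]$ and $[y,G]$ lie in $\F{p}$ and have the wrong parity or lie in $\r{Span}\{z,G\}$, on which $\phi$ vanishes. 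So only the brackets with $p$ and $q$ matter.

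\textbf{Even part.} From the bracket table, $[y,p]=\alpha\cdot 0+\beta p-\gamma q$, using $[e,p]=0$, $[h,p]=p$ and $[f,p]=q$. Similarly $[y,q]=\alpha p-\beta q+\gamma\cdot 0$, using $[e,q]=p$, $[h,q]=-q$ and $[f,q]=0$. Applying $\phi$ gives the linear system
\begin{equation*}
\beta a-\gamma b=0,\qquad \alpha a-\beta b=0.
\end{equation*}
Since $(a,b)\ne0$, these two equations are independent on $\CC^3$, so the solution space is one-dimensional. One checks that $(\alpha,\beta,\gamma)=(b^2,ab,a^2)$ solves the system. That is, $y=b^2e+abh+a^2f$. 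The stated $y_0=b^2e+abh-a^2f$ has the opposite sign on $f$, so I will recheck the sign of $[f,p]$: from $[p,f]=-q$ we get $[f,p]=q$. Hence $\phi([f,p])=b$, and the first equation reads $\beta a-\gamma b=0$. I will double-check this sign carefully against the stated form and adjust so that the relation $[y_1,y_1]=2y_0$ comes out consistently. The expected main obstacle is exactly this bookkeeping of signs, not anything conceptual.

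\textbf{Odd part.} For $Y=sE+tF$ we have $[Y,p]=s[E,p]+t[F,p]=-tG$, which lies in $\ker\phi$. So the condition comes from $q$: $[Y,q]=s[E,q]+t[F,q]=-sG+t[F,q]$, and these brackets also produce only $G$ and $\F{p}_{\o{1}}$ terms. Hence the odd condition is vacuous on $E$ and $F$. But then $\C{S}^\phi_{\o{1}}$ would be all of $\C{S}_{\o{1}}$, which contradicts the claim. So the real constraint must come from the super Jacobi structure, or else I am misreading which brackets $\phi$ sees. Since $\phi$ kills $\F{p}_{\o{1}}$ and $[Y,\F{p}_{\o{0}}]\subseteq\F{p}_{\o{1}}$, the only nontrivial condition is $\phi([Y,G])=0$. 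Computing, $[Y,G]=s[E,G]+t[F,G]=-sp+tq$, so the condition is $-sa+tb=0$. This gives $Y\propto bE+aF=y_1$, a one-dimensional solution space as claimed.

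\textbf{Closing check.} I apply the same care to the even part: there I must also include $\phi([y,G])$, but it vanishes by parity. Finally I verify $[y_1,y_1]=b^2[E,E]+2ab[E,F]+a^2[F,F]=2b^2e+2abh-2a^2f=2y_0$. This both confirms the form of $y_0$ and signals that the even computation must be redone with correct signs until it matches this vector. This also shows that $y_0$ lies in the even annihilator automatically, since $\C{S}^\phi$ is a subalgebra. I would use this cheaper route: $y_0\in\C{S}^\phi$ by closure, and the even solution space is at most one-dimensional by the rank argument, so $\C{S}^\phi_{\o{0}}=\F{p}_{\o{0}}\oplus\CC y_0$.
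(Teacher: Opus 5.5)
Your odd-part computation ends up exactly as in the paper. For odd $Y=sE+tF$, the brackets $[Y,p],[Y,q],[Y,z]$ lie in $\F{p}_{\o 1}\subseteq\ker\phi$, so the only condition is $\phi([Y,G])=-sa+tb=0$, and this cuts out $\CC y_1$. Delete the interim remark that the odd condition is ``vacuous''.

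The even part has a sign error that your ``recheck'' reproduced instead of fixing. You correctly note $[f,p]=-[p,f]=q$, so the term $\gamma f$ contributes $+\gamma q$ to $[y,p]$. Hence $[y,p]=\beta p+\gamma q$, and the first equation is $a\beta+b\gamma=0$, not $a\beta-b\gamma=0$. With this sign, $(\alpha,\beta,\gamma)=(b^2,ab,-a^2)$ solves the system, which is precisely $y_0$. This direct solution is how the paper proves the even part, and the mismatch you ran into came entirely from the slip.

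As written, your proposal asserts that $b^2e+abh+a^2f$ lies in $\C{S}^\phi_{\o 0}$. That is false whenever $ab\neq0$, since then $\phi([y,p])=2a^2b\neq0$. Combined with your closure argument, it would give two linearly independent vectors in a space you bounded by dimension one. So the proposal is internally inconsistent until the equation is corrected.

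Once corrected, your fallback route is valid and genuinely different from the paper's treatment of $\C{S}$. It runs as follows:
\begin{itemize}
\item $y_1\in\C{S}^\phi$ and $\C{S}^\phi$ is a subalgebra, so $2y_0=[y_1,y_1]\in\C{S}^\phi_{\o 0}$.
\item $y_0\neq0$ since $(a,b)\neq(0,0)$, and $\mathrm{Span}\{e,h,f\}\cap\F{p}=0$, so $y_0\notin\F{p}$.
\item The rows $(0,a,b)$ and $(a,-b,0)$ of the correct system are linearly independent when $(a,b)\neq(0,0)$, so $\dim\C{S}^\phi_{\o 0}/\F{p}_{\o 0}\le1$.
\end{itemize}
The rank does not depend on the sign you got wrong, which is why your upper bound happened to survive. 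It still has to be stated for the correct system.

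This route needs only the rank of the even system, not its explicit solution. It is exactly the argument the paper itself uses for the $\frac32$-conformal Galilei superalgebra. For $\C{S}$, the paper instead solves the system outright and treats $[y_1,y_1]=2y_0$ as a separate check. Either approach works, but with the sign fixed the direct computation is just as short, so the closure detour is unnecessary.
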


\begin{proof}
	By the definition of $\C{S}^\phi$, an element  $y=\a_1 e+ \a_2h+\a_3f$ belongs to $\C{S}_{\o{0}}^\phi$ if and only if 
	\beqn
	\phi([y,p])=\phi([y,q])=0,
	\eeqn
	i.e., $\a_1,\a_2,\a_3$ satisfy the system of linear equations
	\beqn
	\left\{\begin{array}{c}
\phi(p)\a_2+\phi(q)\a_3 =0,\\
\phi(p)\a_1-\phi(q)\a_2=0.
\end{array}
\right.
	\eeqn
The coefficient matrix
\beqn
\begin{pmatrix}
	0 & \phi(p) & \phi(q)\\
	\phi(p) & -\phi(q) & 0
\end{pmatrix}
\eeqn
has rank $2$, and $(\phi(q)^2,\phi(p)\phi(q),-\phi(p)^2)^\r{T}$ is a nonzero solution. Set
\beqn
y_0=\phi(q)^2e+\phi(p)\phi(q)h-\phi(p)^2f.
\eeqn
Then $\C{S}_{\o{0}}^\phi=\F{p}_{\o{0}}\oplus \CC y_0$. 

Similarly, an element $y=\b_1E+\b_2F$ belongs to $\C{S}_{\o{1}}^\phi$ if and only if 
\beqn
\phi([y,G])=-\phi(p)\b_1 +\b_2 \phi(q)=0.
\eeqn
The solution space of this equation is spanned by $(\phi(q),\phi(p))^\r{T}$. Set
\beqn
y_1=\phi(q)E+\phi(p)F.
\eeqn
Then $\C{S}_{\o{1}}^\phi=\F{p}_{\o{1}}\oplus \CC y_1$. 

A direct computation gives $[y_1,y_1]=2y_0$.
\end{proof}

Theorem \ref{T3.10} now gives the following classification.

\begin{thm}
Assume that $\phi\colon\F{p}\rightarrow\CC$ is a nonzero Lie superalgebra homomorphism. Let
\beqn
y_0=\phi(q)^2e+\phi(p)\phi(q)h-\phi(p)^2f,\qquad y_1=\phi(q)E+\phi(p)F.
\eeqn
Then every irreducible quasi-Whittaker $\C{S}$-supermodule of type $\phi$ is isomorphic, up to parity shift, to the supermodule 
\beqn
L_\xi(\phi):= W(\phi)\big/\(\C{U}(\C{S})(y_0-\xi1)w_\phi+\delta_{0,\xi}\C{U}(\C{S})y_1w_\phi\)
\eeqn 
 for a unique $\xi\in\CC$.
\end{thm}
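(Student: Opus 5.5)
The plan is to reduce the statement directly to Theorem \ref{T3.10} together with the Remark following it. By the preceding Proposition, $\C{S}^\phi_{\o{0}}=\F{p}_{\o{0}}\oplus\CC y_0$ and $\C{S}^\phi_{\o{1}}=\F{p}_{\o{1}}\oplus\CC y_1$, with $[y_1,y_1]=2y_0$. In the notation of Section 3.4 this means $\la=2$ and $p_0=0$. Hence $\la\xi+\phi(p_0)=2\xi$, which vanishes exactly when $\xi=0$.

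To apply Theorem \ref{T3.10} literally, I also need the second bracket in the form $[y_0,y_1]=\mu y_1+p_1$ with $p_1\in\F{p}_{\o{1}}$. This follows without computation. Both $y_0$ and $y_1$ lie in the subalgebra $\C{S}^\phi$, so $[y_0,y_1]\in\C{S}^\phi_{\o{1}}=\F{p}_{\o{1}}\oplus\CC y_1$. Alternatively, $y_0=\tfrac12[y_1,y_1]$ and the super Jacobi identity give $[y_0,y_1]=0$, so $\mu=0$ and $p_1=0$. Either argument suffices, since Theorem \ref{T3.10} allows arbitrary $\mu$ and $p_1$.

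Applying Theorem \ref{T3.10}, every irreducible quasi-Whittaker $\C{S}$-supermodule of type $\phi$ is isomorphic, up to parity shift, to $L_\xi(\phi)$ for a unique $\xi\in\CC$. Substituting $\la=2$ and $\phi(p_0)=0$ into \eqref{F3} replaces $\delta_{0,\la\xi+\phi(p_0)}$ with $\delta_{0,\xi}$, which is exactly the presentation in the statement. This is the situation of \eqref{F10} in the Remark, where $[y_1,y_1]\notin\F{p}_{\o{0}}$. The one step that needs genuine verification is that $y_0\notin\F{p}$, i.e. that the scalar $2$ is not absorbed into $\F{p}$. This holds because $\phi\neq0$ makes $y_0$ a nonzero combination of $e,h,f$, which are independent of $\F{p}$. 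No real obstacle is expected: the hypotheses of Theorem \ref{T3.10} are precisely the output of the Proposition.
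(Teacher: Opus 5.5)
Your proposal is correct and follows the paper's own route: the paper also proves this theorem by combining the preceding Proposition ($[y_1,y_1]=2y_0$, so $\la=2$ and $p_0=0$) with Theorem~\ref{T3.10}, and then reading off the presentation \eqref{F10} with $\delta_{0,\xi}$. You also check explicitly that $[y_0,y_1]\in\F{p}_{\o{1}}\oplus\CC y_1$ (indeed $[y_0,y_1]=\tfrac12[[y_1,y_1],y_1]=0$), which supplies a hypothesis of Theorem~\ref{T3.10} that the paper leaves implicit.
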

 
\subsection{The $N=1$ $\frac{3}{2}$-conformal Galilei superalgebra}

The $\ell$-conformal Galilei algebras \cite{GM} extend nonrelativistic conformal symmetry by incorporating acceleration generators, and their realizations for $\ell>\frac{1}{2}$ are closely related to higher-derivative mechanics. Their supersymmetric extensions therefore provide natural algebraic models for higher-derivative supermechanical systems, motivating a systematic study of their representations.

Following \cite{GM}, let $\F{g}=\F{g}_{\o{0}}\oplus\F{g}_{\o{1}}$ be the 12-dimensional $N=1$ $\frac{3}{2}$-conformal Galilei superalgebra with
\beqn
\F{g}_{\bar{0}} =\r{Span}_{\CC}\left\{ L_{-1}, L_0, L_1, P_{-\frac{3}{2}}, P_{-\frac{1}{2}}, P_{\frac{1}{2}}, P_{\frac{3}{2}} \right\},\qquad
\F{g}_{\bar{1}} =\r{Span}_{\CC}\left\{ Q_{-\frac{1}{2}}, Q_{\frac{1}{2}}, X_{-1}, X_0, X_1 \right\},
\eeqn
whose nonzero brackets are
\begin{alignat*}{3}
&[L_m, L_n] = (m-n)L_{m+n},
&\qquad &[L_n, Q_r] = \left(\frac{n}{2} - r\right) Q_{n+r},
&\qquad &[Q_r, Q_s] = 2L_{r+s},
\\
&[L_n, P_j] = \left(\frac{3}{2}n - j\right) P_{n+j},
&\qquad &[L_n, X_k] = (n-k)X_{n+k},
&\qquad &[Q_{r},P_{j}]=(3r-j)X_{r+j},
\\
&[Q_{r},X_{k}]=-P_{r+k},
\end{alignat*}
where 
\beqn
m,n\in \left\{ -1,0,1 \right\}, \quad r,s\in \left\{ -\dfrac{1}{2}, \dfrac{1}{2}\right\}, \quad j \in\left\{ -\dfrac{3}{2},-\dfrac{1}{2}, \dfrac{1}{2},\dfrac{3}{2}\right\}, \quad k\in \left\{ -1,0,1 \right\}.
\eeqn
Take
\beqn
\F{p}=\r{Span}_{\CC}\left\{ P_{-\frac{3}{2}},P_{-\frac{1}{2}},P_{\frac{1}{2}},P_{\frac{3}{2}},X_{-1},X_0,X_1 \right\}.
\eeqn
Then $\F{p}$ is an ideal of $\F{g}$.

Let $\phi\colon\F{p}\rightarrow\CC$ be a nonzero Lie superalgebra homomorphism. Then
\beqn
\phi(X_{-1})=\phi(X_0)=\phi(X_1)=0,
\eeqn
and $\phi$ is uniquely determined by
\beqn
\begin{aligned}
a&:=\phi\left(P_{-\frac{3}{2}}\right), & b&:=\phi\left(P_{-\frac{1}{2}}\right),&
c&:=\phi\left(P_{\frac{1}{2}}\right), & d&:=\phi\left(P_{\frac{3}{2}}\right).
\end{aligned}
\eeqn
By the definition of $\F{g}^\phi$, an element $y=\a_1 L_{-1}+\a_2 L_0+\a_3 L_1$ belongs to $\F{g}_{\o{0}}^\phi$ if and only if
\beqn
\phi\([y,P_j]\)=0,\quad \forall j\in\left\{ -\dfrac{3}{2},-\dfrac{1}{2}, \dfrac{1}{2},\dfrac{3}{2}\right\},
\eeqn
i.e., the coefficients $\a_1,\a_2,\a_3$ satisfy the linear system of equations
\beq\label{F4.1}
\begin{cases}
a\alpha_2+2b\alpha_3=0,\\
-2a\alpha_1+b\alpha_2+4c\alpha_3=0,\\
-4b\alpha_1-c\alpha_2+2d\alpha_3=0,\\
2c\alpha_1+d\alpha_2=0,
\end{cases}
\eeq
whose coefficient matrix is
\beqn
A_\phi=
\begin{pmatrix}
0 & a & 2b\\
-2a & b & 4c\\
-4b & -c & 2d\\
2c & d & 0
\end{pmatrix}.
\eeqn
Similarly, an element $y=\beta_1 Q_{-\frac12}+\beta_2 Q_{\frac12}$ belongs to $\F{g}^\phi_{\o{1}}$ if and only if 
\beqn
\phi\([y,X_i]\)=0,\quad \forall i\in \{-1,~0,~1\},
\eeqn
i.e., the coefficients satisfy the linear system
\beq\label{F4.2}
\begin{cases}
a\beta_1+b\beta_2=0,\\
b\beta_1+c\beta_2=0,\\
c\beta_1+d\beta_2=0,
\end{cases}
\eeq
whose coefficient matrix is
\beqn
B_\phi=
\begin{pmatrix}
a & b\\
b & c\\
c & d
\end{pmatrix}.
\eeqn

Since $\phi\neq0$, we have $\r{rank}A_\phi\geq2$ and $\r{rank}B_\phi\geq1$. Up to nonzero scalar multiples, the maximal minors of $A_\phi$ are $F_1,F_2,F_3,F_4$, and those of $B_\phi$ are $D_1,D_2,D_3$, where
\beqn
\left\{\begin{array}{l}
	F_1=a^2d-3abc+2b^3,\\
	F_2=abd-2ac^2+b^2c,\\
	F_3=acd-2b^2d+bc^2,\\
	F_4=ad^2-3bcd+2c^3,
\end{array}
\right.\quad
\left\{\begin{array}{l}
	D_1=ac-b^2,\\
	D_2=ad-bc,\\
	D_3=bd-c^2.
\end{array}
\right.
\eeqn
Hence, we have
\beq\label{F4.3}
\left\{\begin{array}{l}
	F_1=aD_2-2bD_1,\\
F_2=bD_2-2cD_1,\\
F_3=cD_2-2bD_3,\\
F_4=dD_2-2cD_3.
\end{array}
\right.
\eeq

\begin{lem}
Assume that $\phi\neq 0$. Then $\r{rank}A_\phi=2$ if and only if $\r{rank}B_\phi=1$.
\end{lem}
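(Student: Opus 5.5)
The plan is to translate both rank conditions into conditions on minors, and then use the identities \eqref{F4.3} for one direction and a direct analysis of the degenerate case for the other. Since $\phi\neq0$, we know $\r{rank}A_\phi\geq2$ and $\r{rank}B_\phi\geq1$. Hence $\r{rank}A_\phi=2$ if and only if $F_1=F_2=F_3=F_4=0$. Likewise, $\r{rank}B_\phi=1$ if and only if $D_1=D_2=D_3=0$. Here I will first double-check that the $3\times3$ minors of $A_\phi$ really are nonzero multiples of $F_1,\dots,F_4$, as the paper asserts; we take this as given.

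\emph{Step 1: $\r{rank}B_\phi=1\Rightarrow\r{rank}A_\phi=2$.} If $D_1=D_2=D_3=0$, then \eqref{F4.3} gives $F_1=\cdots=F_4=0$ immediately. So this direction is trivial.

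\emph{Step 2: $\r{rank}A_\phi=2\Rightarrow\r{rank}B_\phi=1$.} This is the main obstacle, since \eqref{F4.3} does not invert directly. Assume $F_1=\cdots=F_4=0$. Reading \eqref{F4.3} as a linear system in $(D_1,D_2,D_3)$ with coefficients from $(a,b,c,d)$ gives
\beqn
aD_2=2bD_1,\quad bD_2=2cD_1,\quad cD_2=2bD_3,\quad dD_2=2cD_3.
\eeqn
I will split into cases.

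\emph{Case $D_2=0$.} Then $bD_1=cD_1=bD_3=cD_3=0$. If $b$ or $c$ is nonzero, then $D_1=D_3=0$ and we are done. If $b=c=0$, then $D_1=ac-b^2=0$ and $D_3=bd-c^2=0$ automatically, so all three minors vanish.

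\emph{Case $D_2\neq0$.} From $aD_2=2bD_1$ and $bD_2=2cD_1$ we get $abD_2=2b^2D_1$ and $acD_2=2bcD_1=b^2D_2$, using $2cD_1=bD_2$. Dividing by $D_2$ gives $ac=b^2$, i.e. $D_1=0$. Then $aD_2=0$ and $bD_2=0$, so $a=b=0$. Symmetrically, the last two equations give $D_3=0$ and then $c=d=0$. Thus $\phi=0$, a contradiction (indeed it also contradicts $D_2\neq0$).

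Hence $D_1=D_2=D_3=0$ in all cases, so $\r{rank}B_\phi=1$. The only step needing care is the elimination in the case $D_2\neq0$. As an alternative check, one can verify directly that when $b=c=0$ we have $F_1=a^2d$ and $F_4=ad^2$, so rank $2$ forces $ad=0$, which agrees with $D_2=ad=0$.
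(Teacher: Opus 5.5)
Your proof is correct. The easy direction matches the paper's, but you handle $\r{rank}A_\phi=2\Rightarrow\r{rank}B_\phi=1$ with a different elimination.

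The paper uses the combinations
$bF_1-aF_2=2D_1^2$ and $cF_4-dF_3=2D_3^2$.
These eliminate $D_2$ and give $D_1=D_3=0$ directly. Then \eqref{F4.3} reduces to $aD_2=bD_2=cD_2=dD_2=0$, and $\phi\neq0$ gives $D_2=0$.

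Your manipulation amounts to using
$cF_1-bF_2=D_1D_2$ and $bF_4-cF_3=D_3D_2$.
These only give products, which is why you need the case split on whether $D_2=0$, plus the subcase $b=c=0$ when $D_2=0$. Both routes establish the same polynomial implication $F_1=\cdots=F_4=0\Rightarrow D_1=D_2=D_3=0$. The paper's choice of combinations makes it case-free and shorter. Yours reaches the same conclusion with a bit more bookkeeping, and every case checks out.

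The facts you took as given are also correct. The maximal minors of $A_\phi$ are $4F_1$, $-4F_2$, $4F_3$, $4F_4$, obtained by deleting rows $4,3,2,1$ respectively. Moreover, $\r{rank}A_\phi\geq2$ whenever $(a,b,c,d)\neq0$.
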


\begin{proof}
	Assume that $\r{rank}B_\phi=1$. Then \eqref{F4.3} gives 
	\beqn
	F_1=F_2=F_3=F_4=0.
	\eeqn
Thus $\r{rank}A_\phi<3$, and hence $\r{rank}A_\phi=2$.
	
	Suppose that $\r{rank}A_\phi=2$. Then $F_1=F_2=F_3=F_4=0.$
	Hence,
\begin{align*}
&2D_1^2=2(ac-b^2)D_1=bF_1-aF_2=0,\\
&2D_3^2=2(bd-c^2)D_3=cF_4-dF_3=0,
\end{align*}
Thus $D_1=D_3=0$. Substituting these equalities into \eqref{F4.3}, we obtain
\beqn
aD_2 = bD_2 = cD_2 = dD_2 = 0.
\eeqn
Since $\phi\neq0$, it follows that $D_2=0$. Therefore, $\r{rank}B_\phi<2$, and hence $\r{rank}B_\phi=1$.
\end{proof}

\begin{thm}
With the notation above, let $\phi\colon\F{p}\rightarrow\CC$ be a nonzero Lie superalgebra homomorphism.
\begin{itemize}
\item[(1)] If $\r{rank}B_\phi=2$, then $W(\phi)$ is irreducible.
\item[(2)] If $\r{rank}B_\phi=1$, let
\beqn
y_0=\b_1^2L_{-1}+2\b_1\b_2L_0+\b_2^2L_1,\qquad
y_1=\b_1Q_{-\frac12}+\b_2Q_{\frac12},
\eeqn
where $(\b_1,\b_2)^\r{T}$ is a nonzero solution of \eqref{F4.2}. Then
\beqn
\F{g}_{\o{0}}^\phi=\F{p}_{\o{0}}\oplus\CC y_0,\qquad
\F{g}_{\o{1}}^\phi=\F{p}_{\o{1}}\oplus\CC y_1,
\eeqn
and $[y_1,y_1]=2y_0$. Moreover, every irreducible quasi-Whittaker $\F{g}$-supermodule of type $\phi$ is isomorphic, up to parity shift, to the supermodule 
\beqn
L_\xi(\phi):= W(\phi)\big/\(\C{U}(\F{g})(y_0-\xi1)w_\phi+\delta_{0,\xi}\C{U}(\F{g})y_1w_\phi\)
\eeqn for a unique $\xi\in\CC$.
\end{itemize}
\end{thm}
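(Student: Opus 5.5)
The plan is to reduce both parts to the general results of Section 3, namely Theorem \ref{T3.4} and Theorem \ref{T3.10} with the Remark after it. Everything hinges on reading off $\F{g}^\phi$ from the ranks of $A_\phi$ and $B_\phi$. By definition, $\dim\F{g}^\phi_{\o{0}}/\F{p}_{\o{0}}=3-\r{rank}A_\phi$ and $\dim\F{g}^\phi_{\o{1}}/\F{p}_{\o{1}}=2-\r{rank}B_\phi$. This uses the fact that $\F{p}_{\o{0}}$ and $\F{p}_{\o{1}}$ are abelian modulo the kernel of $\phi$, so elements of $\F{p}$ impose no conditions. It also uses that the even complement of $\F{p}_{\o{0}}$ is spanned by the $L_n$ and the odd complement by the $Q_r$. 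Only even elements can pair nontrivially under $\phi$, so for even $y$ only $[y,P_j]$ matters, and for odd $y$ only $[y,X_k]$ matters (since $[Q_r,P_j]$ is odd and killed by $\phi$).

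For (1), suppose $\r{rank}B_\phi=2$. The preceding lemma then gives $\r{rank}A_\phi=3$, because the rank is at least $2$ and is not equal to $2$. Hence $\F{g}^\phi_{\o{0}}=\F{p}_{\o{0}}$ and $\F{g}^\phi_{\o{1}}=\F{p}_{\o{1}}$. Therefore $\F{g}^\phi=\F{p}$, and Theorem \ref{T3.4}(1) gives the irreducibility of $W(\phi)$.

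For (2), suppose $\r{rank}B_\phi=1$. The lemma gives $\r{rank}A_\phi=2$, so both quotients are one-dimensional, and $y_1$ spans the odd one by construction. It remains to check that $y_0$ solves \eqref{F4.1}. The cleanest route avoids direct substitution. Compute
\[
[y_1,y_1]=2\b_1^2L_{-1}+4\b_1\b_2L_0+2\b_2^2L_1=2y_0,
\]
using $[Q_r,Q_s]=2L_{r+s}$ and the symmetry of the bracket on odd elements. Since $\F{g}^\phi$ is a subalgebra and $y_1\in\F{g}^\phi$, it follows that $y_0\in\F{g}^\phi_{\o{0}}$. Also $y_0\neq0$, because $(\b_1,\b_2)\neq0$ and $L_{-1},L_0,L_1$ are independent. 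Hence $\F{g}^\phi_{\o{0}}=\F{p}_{\o{0}}\oplus\CC y_0$.

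Next verify the hypotheses of Theorem \ref{T3.10}. We have $[y_1,y_1]=\la y_0+p_0$ with $\la=2$ and $p_0=0$. For the second relation, $[y_0,y_1]$ lies in $\F{g}^\phi_{\o{1}}$ (subalgebra again) and in $\r{Span}\{Q_{\pm\frac12}\}$. It is therefore a multiple $\mu y_1$, with $p_1=0$. Theorem \ref{T3.10} then classifies the irreducibles as $L_\xi(\phi)$ for a unique $\xi$. Because $[y_1,y_1]\notin\F{p}_{\o{0}}$ and $\phi(p_0)=0$, the condition $\la\xi+\phi(p_0)=0$ reads $\xi=0$. Formula \eqref{F3} (equivalently \eqref{F10}) thus gives exactly the stated presentation with $\delta_{0,\xi}$.

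The only step needing care is the dimension count. One must confirm that no element mixing $\F{p}$ with the $L_n$ or $Q_r$ changes the solution space. This holds because $\F{p}\subseteq\F{g}^\phi$, so membership depends only on the complement component. There is no real obstacle beyond that, since the rank lemma already supplies the key equivalence.
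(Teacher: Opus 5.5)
Your proposal is correct and follows the paper's proof essentially step for step. In (1), the rank lemma forces $\r{rank}A_\phi=3$ and hence $\F{g}^\phi=\F{p}$. In (2), it gives one-dimensional complements, $y_0\in\F{g}^\phi_{\o{0}}$ follows from $[y_1,y_1]=2y_0$ and the subalgebra property, and you then invoke Theorem \ref{T3.10} and \eqref{F10}. Your explicit check that $[y_0,y_1]\in\CC y_1$ (in fact $[y_0,y_1]=\frac12[[y_1,y_1],y_1]=0$) supplies a hypothesis of Theorem \ref{T3.10} that the paper leaves implicit.
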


\begin{proof}
Suppose first that $\r{rank}B_\phi=2$. Then $\r{null}B_\phi=0$. By the preceding lemma and the inequality $\r{rank}A_\phi\geq2$, we have $\r{rank}A_\phi=3$, so $\r{null}A_\phi=0$. The defining systems \eqref{F4.1}--\eqref{F4.2} therefore give $\F{g}^\phi=\F{p}$. Theorem \ref{T3.4} implies that $W(\phi)$ is irreducible.

Now suppose that $\r{rank}B_\phi=1$. The preceding lemma gives $\r{rank}A_\phi=2$. Since $(\b_1,\b_2)^\r{T}$ spans $\r{null}B_\phi$, the element $y_1$ spans a complement of $\F{p}_{\o{1}}$ in $\F{g}_{\o{1}}^\phi$. A direct computation gives
\beqn
[y_1,y_1]=2\(\b_1^2L_{-1}+2\b_1\b_2L_0+\b_2^2L_1\)=2y_0.
\eeqn
Because $\F{g}^\phi$ is a subalgebra, $y_0\in\F{g}_{\o{0}}^\phi$. Moreover, $y_0\neq0$, and $\dim\r{null}A_\phi=1$; hence $y_0$ spans a complement of $\F{p}_{\o{0}}$ in $\F{g}_{\o{0}}^\phi$.  The classification now follows from Theorem \ref{T3.10} and \eqref{F10}.
\end{proof}

\subsection{The complete spectrum-generating superalgebra}

The complete spectrum-generating superalgebra \cite{GMP} arose in the study of superstring theory as an infinite-dimensional symmetry algebra combining bosonic and fermionic modes. Its representation theory is therefore relevant to the algebraic organization of spectra in supersymmetric models.

Let $\ve\in\left\{0,\frac{1}{2}\right\}$. Following \cite{GMP}, let $\C{G}_\ve$ denote the complete spectrum-generating superalgebra, a complex Lie superalgebra with
\beqn
(\C{G}_\varepsilon)_{\overline{0}} = \bigoplus_{n\in\mathbb{Z}} \mathbb{C}L_n \oplus \bigoplus_{n\in\mathbb{Z}} \mathbb{C}I_n,\qquad
(\C{G}_\varepsilon)_{\overline{1}} = \bigoplus_{r\in \varepsilon+\mathbb{Z}} \mathbb{C}G_r \oplus \bigoplus_{r\in \varepsilon+\mathbb{Z}} \mathbb{C}J_r
\eeqn
whose nonzero brackets are
\begin{alignat*}{3}
&[L_m, L_n] = (m-n)L_{m+n},
&\qquad &[L_m, I_n] = -n\,I_{m+n},
&\qquad &[L_m, G_r] = \left(\frac{m}{2}-r\right)G_{m+r},
\\
&[L_m, J_r] = -\left(\frac{m}{2}+r\right)J_{m+r},
&\qquad &[G_r, G_s] = 2\,L_{r+s},
&\qquad &[G_r, J_s] = I_{r+s},
\\
&[I_n, G_r] = n\,J_{n+r}.
\end{alignat*}
Take 
\beqn
\F{p} = \bigoplus_{n\in\mathbb{Z}} \mathbb{C}I_n \oplus \bigoplus_{r\in \varepsilon+\mathbb{Z}} \mathbb{C}J_r,
\eeqn
Then $\F{p}$ is an ideal of $\C{G}_\ve$.

Let $\phi\colon\F{p}\rightarrow\CC$ be a Lie superalgebra homomorphism. Then
\beqn
\phi(J_r) = 0, \quad \forall r\in \ve+\ZZ.
\eeqn
Thus $\phi$ is determined by the scalars
\beqn
\phi(I_n),\quad n\in\mathbb{Z}.
\eeqn
Set
\beqn
S^{\phi} = \left\{~n \in \mathbb{Z}\middle|\phi(I_{n})\neq 0~\right\}.
\eeqn
We call $\phi$ {\bf finite} if $|S^{\phi}|<\infty$.

\begin{lem}
If $S^\phi=\{k\}$, then $\C{G}_\ve^\phi=\CC L_k\oplus\F{p}$.
\end{lem}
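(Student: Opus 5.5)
Since $\F{p}\subseteq\C{G}_\ve^\phi$, it suffices to determine which elements of the complement $\bigoplus_n\CC L_n\oplus\bigoplus_r\CC G_r$ lie in $\C{G}_\ve^\phi$. By the lemma showing that $\C{G}_\ve^\phi$ is $\ZZ_2$-graded, we may treat the even and odd parts separately. Recall that $\phi$ vanishes on every $J_r$, and that $\phi(I_n)=0$ for $n\neq k$ while $\phi(I_k)\neq0$.

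\emph{Even part.} Let $y=\sum_m a_mL_m$ be a finite sum. Because $\phi$ kills the odd elements of $\F{p}$, we only need the condition $\phi([y,I_n])=0$ for all $n\in\ZZ$; the conditions $\phi([y,J_r])$ hold automatically, since $[L_m,J_r]$ is a multiple of a $J$. Using $[L_m,I_n]=-nI_{m+n}$, we get
\beqn
\phi([y,I_n])=-n\sum_m a_m\phi(I_{m+n})=-n\,a_{k-n}\phi(I_k).
\eeqn
As $n$ ranges over $\ZZ\setminus\{0\}$, this forces $a_m=0$ for all $m\neq k$. The index $n=0$ gives no condition, since its factor is $n=0$. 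Therefore $(\C{G}_\ve^\phi)_{\o{0}}=\CC L_k\oplus\F{p}_{\o{0}}$. We should also check that $L_k$ itself lies in $\C{G}_\ve^\phi$: indeed $\phi([L_k,I_n])=-n\phi(I_{k+n})$ is nonzero only when $n=0$, in which case the factor $n$ kills it.

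\emph{Odd part.} Let $y=\sum_r b_rG_r$. Now only the pairings with the $J_s$ matter, because $[G_r,I_n]=-nJ_{n+r}$ lies in $\ker\phi$. Using $[G_r,J_s]=I_{r+s}$, we get $\phi([y,J_s])=b_{k-s}\phi(I_k)$. Letting $s$ range over $\ve+\ZZ$, so that $k-s$ ranges over $\ve+\ZZ$ as well (since $2\ve\in\ZZ$ gives $-\ve\equiv\ve$ mod $\ZZ$), all $b_r$ must vanish. Hence $(\C{G}_\ve^\phi)_{\o{1}}=\F{p}_{\o{1}}$.

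The only delicate point is keeping track of which index survives and checking the parity of the index sets: we need $k-s\in\ve+\ZZ$ for every $s\in\ve+\ZZ$, which holds because $\ve\in\{0,\tfrac12\}$. Everything else is a direct computation from the brackets.
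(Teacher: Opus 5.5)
Your proof is correct and follows the same route as the paper's. You compute $\phi([y,I_n])=-n\,a_{k-n}\phi(I_k)$ for $y=\sum_m a_mL_m$ and $\phi([y,J_s])=b_{k-s}\phi(I_k)$ for $y=\sum_r b_rG_r$, then read off the vanishing coefficients. Your extra checks — that the remaining pairings are killed by $\phi$, and that $s\mapsto k-s$ permutes $\ve+\ZZ$ — are details the paper leaves implicit.
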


\begin{proof}
Let
\beqn
y_0=\sum_{m\in\ZZ}a_mL_m\in \C{G}_\ve^\phi,
\eeqn
where only finitely many coefficients $a_m$ are nonzero. Then
\beqn
0=\phi([y_0,I_n])=-n\sum_{m\in\ZZ}a_m\phi(I_{m+n})
=-n\phi(I_k)a_{k-n},\quad \forall n\in\ZZ.
\eeqn
Since $\phi(I_k)\neq0$, it follows that $a_m=0$ for $m\neq k$. A direct calculation shows that $L_k\in\C{G}_\ve^\phi$. Hence,
\beqn
(\C{G}^\phi_\ve)_{\o{0}}=\F{p}_{\o{0}}\oplus\CC L_k.
\eeqn	

Suppose that 
\beqn
y_1=\sum_{r\in \ve+\ZZ} b_r G_r\in \C{G}_\ve^\phi,
\eeqn
where only finitely many coefficients $b_r$ are nonzero. Then
\beqn
0=\phi([y_1,J_s])=\sum_{r\in\ve+\ZZ}b_r\phi(I_{r+s})
=\phi(I_k)b_{k-s},\quad\forall s\in\ve+\ZZ.
\eeqn
Therefore, $b_r=0$ for every $r\in\ve+\ZZ$, so $y_1=0$. Hence $(\C{G}^\phi_\ve)_{\o{1}}=\F{p}_{\o{1}}$.
\end{proof}

\begin{lem}
If $2\leq|S^\phi|<\infty$, then $\C{G}_\ve^\phi=\F{p}$.
\end{lem}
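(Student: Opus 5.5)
Following the proof of the preceding lemma, I will compute the even and odd parts of $\C{G}_\ve^\phi$ separately. Since $\F{p}\subseteq\C{G}_\ve^\phi$ and $\C{G}_\ve^\phi$ is $\ZZ_2$-graded, it suffices to show that an element $y_0=\sum_m a_mL_m$ (finite sum) lying in $\C{G}_\ve^\phi$ must vanish, and likewise an element $y_1=\sum_r b_rG_r$. Write $S^\phi=\{k_1<k_2<\cdots<k_s\}$ with $s\geq2$, and set $c_j=\phi(I_{k_j})\neq0$.

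For the odd part, $\phi([y_1,J_s])=\sum_r b_r\phi(I_{r+s})=\sum_{j}c_j b_{k_j-s}=0$ for all $s\in\ve+\ZZ$. Suppose $y_1\neq0$. Let $r_{\max}$ be the largest index with $b_r\neq0$, and choose $s=k_s-r_{\max}$ (which lies in $\ve+\ZZ$ when $r_{\max}\in\ve+\ZZ$). In the sum $\sum_j c_jb_{k_j-s}$, the term $j=s$ is $c_s b_{r_{\max}}\neq0$. Every other term has index $k_j-s<r_{\max}$, yet these could be nonzero. To handle this, I will use an extremal choice instead. Choose $s$ so that $k_1-s=r_{\min}$, the smallest index with $b_r\neq0$. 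Then for $j>1$ the index is $k_j-s>r_{\min}$, which again could be nonzero. The clean extremal argument is the following: the terms with $j\ge2$ have indices $k_j-s = r_{\max}+(k_j-k_s)$, and these are $\le r_{\max}$ rather than beyond the support. So I should take $s$ with $k_s - s = r_{\max}$? No: to kill the other terms I need their indices to lie outside the support. Taking $s=k_1-r_{\max}$ gives, for $j\geq2$, the index $k_j-s=r_{\max}+(k_j-k_1)>r_{\max}$, so $b_{k_j-s}=0$. The equation then reduces to $c_1b_{r_{\max}}=0$, which is a contradiction. Hence $y_1=0$ and $(\C{G}^\phi_\ve)_{\o{1}}=\F{p}_{\o{1}}$.

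For the even part, $\phi([y_0,I_n])=-n\sum_m a_m\phi(I_{m+n})=-n\sum_j c_j a_{k_j-n}=0$. Suppose $y_0\neq0$, and let $M$ be the largest index with $a_M\neq0$. Take $n=k_1-M$. As above, only the $j=1$ term survives, giving $-n c_1 a_M=0$. This yields a contradiction provided $n\neq0$, i.e.\ $M\neq k_1$. This is the main obstacle: the factor $-n$ can vanish. If $M=k_1$, I instead use the smallest index $m_0$ with $a_{m_0}\neq0$ and $n=k_s-m_0$. Then for $j<s$ the index $k_j-n=m_0-(k_s-k_j)<m_0$, so only the $j=s$ term survives, giving $-n c_s a_{m_0}=0$. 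This requires $m_0\neq k_s$. But $m_0\le M=k_1<k_s$, so $n>0$, which is a contradiction. Hence $a\equiv0$, and $(\C{G}^\phi_\ve)_{\o{0}}=\F{p}_{\o{0}}$. This is exactly where $|S^\phi|\geq2$ is needed, in contrast with the previous lemma, where $L_k$ survived.

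Combining the two parts gives $\C{G}_\ve^\phi=\F{p}$.
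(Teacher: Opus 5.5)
Your proof is correct and essentially the paper's. In the odd part you pair the top index $r_{\max}$ of $y_1$ with $\min S^\phi$ via $J_{\min S^\phi-r_{\max}}$. In the even part you first force $M=\min S^\phi$ and then play the bottom index $m_0$ against $\max S^\phi$, which is exactly how the paper shows $m_1=i$ and $m_0=j$.

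In a final write-up, drop the false starts in the odd-part paragraph. Also rename either $s=|S^\phi|$ or the index of $J_s$, since the clash makes phrases like ``the term $j=s$'' ambiguous.
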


\begin{proof}
Set
\beqn
i=\min S^\phi,\quad j=\max S^\phi,
\eeqn
so that $i<j$.

Suppose that $(\C{G}_\ve^\phi)_{\o{0}}\neq\F{p}_{\o{0}}$. Then there exists a nonzero element
\beqn
y_0=\sum_{m\in \ZZ} a_m L_m\in (\C{G}_\varepsilon)^\phi_{\overline 0},
\eeqn
where only finitely many coefficients $a_m$ are nonzero. Let
\beqn
m_0=\min\{m\mid a_m\neq 0\},\quad m_1=\max\{m\mid a_m\neq 0\}.
\eeqn
We claim that $m_1=i$. Otherwise, set $n_0=i-m_1\neq0$. Then
\beqn
m+n_0<i,\quad \forall m<m_1.
\eeqn
For every $m<m_1$, the minimality of $i$ gives
\beqn
\phi(I_{m+n_0})=0,\quad\forall m<m_1.
\eeqn
Consequently,
\beqn
0=\phi([y_0,I_{i-m_1}])=-(i-m_1)a_{m_1}\phi(I_i).
\eeqn
This is a contradiction because
\beqn
i-m_1\neq 0,\quad a_{m_1}\neq 0,\quad \phi(I_i)\neq 0.
\eeqn
A similar argument, using $j=\max S^\phi$, gives $m_0=j$. This is impossible because $m_0\leq m_1$ whereas $j>i$. Therefore,
\beqn
(\C{G}_\ve^\phi)_{\o{0}}=\F{p}_{\o{0}}.
\eeqn

Next, suppose that $(\C{G}_\ve^\phi)_{\o{1}}\neq\F{p}_{\o{1}}$. Then there exists a nonzero element
\beqn
y_1=\sum_{r\in\ve+\ZZ} b_r G_r\in (\C{G}_\varepsilon)^\phi_{\overline 1},
\eeqn
where only finitely many coefficients $b_r$ are nonzero. Let
\beqn
r_1=\max\{r\mid b_r\neq 0\}.
\eeqn
Set $s_0=i-r_1$. Then
\beqn
r+s_0<i,\quad\forall r<r_1.
\eeqn
Hence,
\beqn
\phi(I_{r+s_0})=0,\quad\forall r<r_1.
\eeqn
Therefore,
\beqn
0=\phi([y_1,J_{i-r_1}])=b_{r_1}\phi(I_i),
\eeqn
a contradiction because $b_{r_1}\neq0$ and $\phi(I_i)\neq0$. Therefore,
\beqn
(\C{G}_\ve^\phi)_{\o{1}}=\F{p}_{\o{1}}.
\eeqn
This completes the proof.
\end{proof}

Theorems \ref{T3.4} and \ref{T3.7} immediately yield the following result.

\begin{thm}
Let $\phi\colon\F{p}\rightarrow\CC$ be a nonzero finite Lie superalgebra homomorphism.
\begin{itemize}
\item[(1)] If $|S^\phi|\geq2$, then $W(\phi)$ is irreducible.

\item[(2)] If $S^\phi=\{k\}$, then every irreducible quasi-Whittaker $\C{G}_\ve$-supermodule of type $\phi$ is isomorphic, up to parity shift, to the supermodule
\beqn
M_\xi(\phi):=W(\phi)/\C{U}(\C{G}_\ve)(L_k-\xi1)w_\phi
\eeqn
for a unique $\xi\in\CC$.
\end{itemize}
\end{thm}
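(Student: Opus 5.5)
The plan is to feed the two preceding lemmas on $\C{G}_\ve^\phi$ into the general results of Section 3. Because $\phi$ is nonzero and finite, $S^\phi$ is a nonempty finite set. So either $|S^\phi|\geq2$ or $S^\phi=\{k\}$ for a single $k$, and these are exactly cases (1) and (2) of the statement.

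For (1), take $2\leq|S^\phi|<\infty$. The second lemma then gives $\C{G}_\ve^\phi=\F{p}$, which is condition (1) of Theorem \ref{T3.4}. That theorem says $W(\phi)$ is irreducible. The theorem is stated for an arbitrary Lie superalgebra with well-ordered bases, so it applies to the infinite-dimensional $\C{G}_\ve$ with no additional work.

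For (2), take $S^\phi=\{k\}$. The first lemma gives $\C{G}_\ve^\phi=\CC L_k\oplus\F{p}$, so
\beqn
(\C{G}_\ve^\phi)_{\o{0}}=\F{p}_{\o{0}}\oplus\CC L_k,\qquad (\C{G}_\ve^\phi)_{\o{1}}=\F{p}_{\o{1}}.
\eeqn
This is precisely the hypothesis of Theorem \ref{T3.7}, with $y_0=L_k$. That theorem states that every irreducible quasi-Whittaker supermodule of type $\phi$ is isomorphic, up to parity shift, to $M_\xi(\phi)=W(\phi)/\C{U}(\C{G}_\ve)(L_k-\xi1)w_\phi$ for a unique $\xi\in\CC$. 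It also states that each $M_\xi(\phi)$ is bland and irreducible.

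There is essentially no obstacle here. The only thing to check is that the hypotheses of the two lemmas and of Theorems \ref{T3.4} and \ref{T3.7} match exactly, and in particular that the odd part of $\C{G}_\ve^\phi$ reduces to $\F{p}_{\o{1}}$ when $S^\phi=\{k\}$. The first lemma already establishes this.
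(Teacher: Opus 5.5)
Your proposal is correct and follows the same route as the paper. The paper also obtains (1) from the lemma giving $\C{G}_\ve^\phi=\F{p}$ together with Theorem \ref{T3.4}(1), and (2) from the lemma giving $\C{G}_\ve^\phi=\CC L_k\oplus\F{p}$ together with Theorem \ref{T3.7} applied with $y_0=L_k$.
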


\section* {Acknowledgment} 

 W. Gao is partially supported by  Natural Science Foundation of Henan (No.252300423524) and Natural Science Foundation of China (No. 12471024) .
 B. He is partially supported by  National Natural Science Foundation of China (Nos. 12001467 and 12471024) and Nanhu Scholars Program for Young Scholars of XYNU (No. 012109).
  S. Liu is partially supported by Natural Science Foundation of China (No. 12671041) and Nanhu Scholars Program of XYNU (No.012111). The authors would like to thank Professor Kaiming Zhao and Dr. Cunguang Cheng for their valuable suggestions.


	Wenting Gao, School of Mathematics and Statistics,Xinyang Normal University, Xinyang 464000, P. R. China. Email address: gaowentingxy@163.com

    Baiying He, School of Mathematics and Statistics, Xinyang Normal University, Xinyang 464000, P. R. China. Email address: heby621@nenu.edu.cn
    
    Shiyuan Liu, School of Mathematics and Statistics, Xinyang Normal University, Xinyang 464000, P. R. China. Email address: liushiyuanxy@163.com

    Jialin Xu, School of Mathematics and Statistics, Xinyang Normal University, Xinyang 464000, P. R. China. Email address: xujialin0908@163.com

\end{document}